\newtheorem{lem}{Lemma}
\newtheorem{deff}{Definition}
\newtheorem{mytheor}{Theorem}
\newtheorem{theor}{Theorem}
\newtheorem{cor}{Corollary}
\newtheorem{rem}{Remark}
\newcommand{\meas}{\mathcal{M}}
\newcommand{\clNF}{\mathcal{NF}}
\newcommand{\N}{\mathbb{N}}
\newcommand{\C}{\mathbb{C}}
\newcommand{\sA}{\mathscr{A}}
\newcommand{\bB}{\mathcal{B}}
\newcommand{\bS}{\mathcal{S}}
\newcommand{\fa}{\mathfrak{a}}
\newcommand{\expect}{\mathbb{E}}
\DeclareMathOperator{\supp}{supp}
\DeclareMathOperator{\pr}{pr}
\DeclareMathOperator{\Span}{Span}
\begin{document}

\author{Andrei Alpeev  \footnote{DMA, École normale supérieure, Université PSL, CNRS, 75005 Paris, France,\\alpeevandrey@gmail.com}}
\title{A characterization of $C^*$-simplicity of countable groups via Poisson boundaries}

\maketitle

\begin{abstract}
We characterize $C^*$-simplicity for countable groups by means of the following dichotomy. If a group is $C^*$-simple, then the action on the Poisson boundary is essentially free for a generic measure on the group. If a group is not $C^*$-simple, then the action on the Poisson boundary is not essentially free for a generic measure on the group.
\end{abstract}

\section{Introduction}

Let $G$ be a discrete group. Consider its group algebra $\C[G]$. There is a natural linear representation of $\C[G]$ on $\C[G]$. This representation could be extended into a unitary representation of $\C[G]$ on $l^2(G)$ (since $\C[G]$ embeds densely into $l^2(G)$), The norm-closure $C_r(G)$ of the image of $C[G]$ in $\bB(l^2(G))$ is called the {\em reduced $C^*$-algebra of the group $G$}. A group is called $C^*$-simple if its reduced $C^*$-algebra is a simple $C^*$-algebra (meaning it does not have any non-trivial closed two-sided ideals). The question of $C^*$-simplicity for groups has drawn a significant attention in the previous decade when it was establsihed by Kalantar and Kennedy that it is equivalent to the topological freeness of the action of the group on its Furstenberg boundary (the maximal minimal strongly proximal action; strong proximality means that the orbit of any Borel probability measure on the space has a delta-measure in its weak-* closure). 
This approach was used later in \cite{BKKO17} to characterize the unique trace property as being equivalent to the triviality of the amenable radical (the maximal normal amenable subroup) of the group. It was conjectured that non $C^*$-simple groups are exactly the groups with the non-trivial amenable radical, as it turned out, the latter class forms not only a subclass, but a proper subclass of the former, as was shown by $\cite{Bo17}$, i.e. there are non-$C^*$-simple groups without non-trivial normal amenable subgroups.

In this text we would like to connect $C^*$-simplicity of groups with the behaviour of their Poisson boundary. 

Let us recall what is the Poisson boundary of a group and how the groups acts on it. Let $G$ be a countable group and $\nu$ a measure on $G$. We say that a measure is non-degenerate if its support generate $G$ as a semigroup.
Let $(X_i)_{i \in \N}$ be a $G$-valued i.i.d. process where each $X_i$ has distribution $\nu$. Let $(Z_i)_{i \in \N}$ be a process defined by $Z_i = X_1 \cdot \ldots \cdot X_i$. We endow the product $\Omega = \prod_{i = 1}^{\infty} G_i$, where $G_i$'s are copies of $G$, with the measure $\eta$ of path distribution of the $\nu$-random walk. In particular, $\pr_{G_i} \eta = \nu^{*i}$. There is an action of $G$ on $\Omega$: 
$$ \gamma \cdot (g_1, g_2, \ldots ) = (\gamma g_1, \gamma g_2, \ldots).$$
Let $\sA_i$ for $i \in \N$ denote the subalgebra of measurable subsets generated by the $G_i$ component of $\Omega$ (or equivalently, by $Z_i$). Let $\sA_{[i,\infty)}$ denote the join (the minimal subalgebra generated by) of $\sA_i, \sA_{i+1}, \ldots$.
The intersection $\sA_{\infty} = \bigcap_{i \in \N} \sA_{[i,\infty)}$ is the tail subalgebra of the random walk. Note that there is a unique up to isomorphism space $\partial_T(G, \nu)$ together with a natural map $\pr_{\partial_T} : \Omega \to \partial_T(G, \nu)$ such that $\sA_{\infty}$ is essentially the preimage of the natural Borel algebra on $\partial_T(G, \nu)$ under $\pr_{\partial}$. We note, that there is an induced quasi-invariant action of $G$ on $\partial(G, \nu)$ such that the map $\pr_{\partial} : \Omega \to \partial(G, \nu)$ is equivariant, if $\nu$ is non-degenerate. 
In general, the Poisson boundary is defined as the space of ergodic components under the shift-action, but in case of non-degenerate measure, it coincides with the tail boundary we defined earlier (see \cite{Ka92}).

For a countable set $X$ let us denote $\meas(X)$ the space of all probability measures on $X$ endowed with the total variation distance.
Our first main result is the following:

\begin{mytheor}\label{thm: main nonfree}
Let $G$ be a countable non-$C^*$-simple group. For a generic (resp. generic symmetric) probability measure $\nu$ on $G$ with respect to the total variation norm, the action of $G$ on the Poisson boundary is not essentilaly free.
\end{mytheor}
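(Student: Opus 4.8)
The plan is to exploit two reductions and then confront the genericity head-on. First, a reduction of the target property. The $G$-action on $(\partial(G,\nu),\lambda_\nu)$, with $\lambda_\nu$ the harmonic (hitting) measure, is ergodic; hence for every $g\neq e$ the $G$-invariant set $\{x:\stab(x)\cap[g]\neq\emptyset\}$, where $[g]$ is the conjugacy class of $g$, has $\lambda_\nu$-measure $0$ or $1$. Consequently the action is essentially free if and only if this measure is $0$ for every $g\neq e$, and it is \emph{not} essentially free as soon as a single nontrivial element acts trivially on the whole boundary, since then $\stab(x)$ is nontrivial for a.e.\ $x$. So it suffices to produce, for a comeager set of $\nu$, a nontrivial subgroup acting $\lambda_\nu$-trivially on $\partial(G,\nu)$.

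Second, a reduction of the input. By the Kalantar--Kennedy theorem recalled in the introduction, non-$C^*$-simplicity means the $G$-action on the Furstenberg boundary $\partial_F G$ is not topologically free: some $g_0\neq e$ has a fixed-point set with nonempty interior $U$. Every point of $U$ then has a stabilizer containing $g_0$, and since point stabilizers of the (strongly proximal, minimal) Furstenberg boundary are classically amenable, the closure of these stabilizers in $\mathrm{Sub}(G)$ contains a nontrivial amenable uniformly recurrent subgroup $\mathcal H$. Fix a nontrivial $H\in\mathcal H$; the goal becomes to make a conjugate of $H$, or at least one nontrivial element of it, act trivially on $\partial(G,\nu)$ for generic $\nu$.

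The mechanism I propose for this is a recurrence/re-randomization phenomenon special to generic measures. Recall the dictionary: $H$ acts trivially on $\partial(G,\nu)$ iff every bounded $\nu$-harmonic function is left $H$-invariant, i.e.\ iff the tail of the walk does not record the $H$-coordinate and $\partial(G,\nu)$ coincides with the boundary of the induced process on $H\backslash G$. A comeager set of measures is ``spread out'' (full support, infinite entropy, heavy tails --- each of these is comeager in $(\meas(G),\|\cdot\|_{\mathrm{TV}})$ and in its symmetric subspace), and I would show that such heaviness forces the $H$-coordinate to be re-randomized infinitely often: a Borel--Cantelli estimate along the trajectory, using that the $\nu$-mass spilled into far-away $H$-translates is non-summable, should give that the limiting tail is left $H$-invariant almost surely. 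Symmetric perturbations handle the symmetric case verbatim, and carrying this out near an arbitrary measure shows the non-free locus is dense.

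The decisive obstacle is twofold. Geometrically, when $\mathcal H$ is not normal (the Le Boudec regime, with trivial amenable radical) the quotient $H\backslash G$ is only a $G$-set and the re-randomization must be read off the confined structure of $\mathcal H$ directly; one cannot import it from $\partial_F G$, because $\partial_F G$ is only a measurable \emph{factor} of $\partial(G,\nu)$, so its nontrivial stabilizers bound the Poisson stabilizers from \emph{above} and transfer nothing. Analytically, density must be upgraded to comeagerness: writing $p_g(\nu)=\lambda_\nu(\mathrm{Fix}(g))$, the free locus is $\bigcap_{g\neq e}\{p_g=0\}$, and I would prove it meager by establishing, for a suitable $g$, a lower-semicontinuity (stability under small total-variation perturbation) of the event that the $H$-coordinate is not recorded, so that $\{p_g>0\}$ contains a dense $G_\delta$. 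This semicontinuity estimate, which must survive the variation of the boundary itself with $\nu$, is the technical heart of the theorem and the step I expect to be the main obstacle.
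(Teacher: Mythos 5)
Your first reduction contains a fatal overreach. You correctly note that ergodicity gives a $0$--$1$ law for the invariant sets $\{x:\stab(x)\cap[g]\neq\varnothing\}$, but you then declare that it suffices to produce a nontrivial subgroup acting trivially on all of $\partial(G,\nu)$. The kernel of the action $G\curvearrowright\partial(G,\nu)$ is a normal subgroup contained in almost every stabilizer, hence amenable, hence contained in the amenable radical. Le Boudec's examples are non-$C^*$-simple with \emph{trivial} amenable radical, so for those groups no nontrivial element can act trivially on the boundary for any non-degenerate $\nu$: your target is unattainable exactly in the case that makes the theorem nontrivial (the nontrivial-amenable-radical case being essentially the Kaimanovich--Vershik/Rosenblatt construction, as the paper itself remarks). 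The correct, weaker target, which the paper pursues via Kennedy's characterization, is that for a fixed finite amenably-visible set $S$ not containing $e$, almost every stabilizer merely \emph{meets} $S$, with the witnessing element allowed to vary from point to point.

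Second, the two steps you yourself flag as ``the technical heart'' are precisely the ones left unproved, and the paper resolves them by a different device than the one you sketch. Rather than attempting lower semicontinuity of $\nu\mapsto\lambda_\nu(\mathrm{Fix}(g))>0$ (which would require controlling how the boundary itself varies with $\nu$), the paper introduces a finitary quantitative surrogate: the class $\clNF_S$ of measures $\nu$ such that for every member $\sigma_i$ of a countable dense family of finitely supported measures there exist $n$ and $s\in S$ with $\lvert s\cdot\sigma_i*\nu^{*n}-\sigma_i*\nu^{*n}\rvert<2\bigl(1-\tfrac{1}{2\lvert S\rvert}\bigr)$. Each such condition is open in total variation, so $\clNF_S$ is $G_\delta$; a $0$--$2$-law argument (Lemma \ref{lem: asymptotic disjointness}) shows that membership in $\clNF_S$ forces almost every stabilizer to meet $S$. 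Density is then an explicit construction: one perturbs an arbitrary $\nu'$ by a measure built from F\o lner sets $F_i$ of the amenable subgroup $H$ witnessing visibility, chosen $(S^{A_{i-1}^{i}}\cap H,1/i)$-invariant, and uses a stopping-time decomposition of $\nu^{*n}$ into pieces $q'*\lambda_{F_i}*q''$ together with the identity $s*q'*\lambda_{F_i}*q''=q'*s^{q'}*\lambda_{F_i}*q''$ and a pigeonhole over $S$ to select $s$ with $s^{q'}\in H$ on a $1/\lvert S\rvert$-fraction of the mass. Your ``re-randomization/Borel--Cantelli'' heuristic supplies neither the open finitary criterion needed for comeagerness nor the conjugation-into-$H$ mechanism that handles the non-normal case, so the proposal does not amount to a proof.
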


We will prove something stronger, for which we will need a new definition.

\begin{deff}
A finite subset $S$ of a group $G$ is called {\em amenably-visible} if for some amenable subgroup $H$ of $G$ we have that $S \cap H^\gamma \neq \varnothing$ for all $\gamma \in G$ (equivalently $S^\gamma \cap H \neq \varnothing$ for all $\gamma \in G$). 
\end{deff}
There are some easy examples. First, any subset that contains the group identity is trivially amenably-visible. Any subset that has a non-empty intersection with a normal amenable subgroup is amenably visible. 

Kennedy in \cite{Ke20} gave the following characterization of $C^*$-simplicity:

\begin{theor}\label{thm: inner criterion}
A group is not C*-simple iff it has a finite amenably-visible subset not containing the group identity.
\end{theor}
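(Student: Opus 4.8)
The plan is to route the combinatorial condition through the language of uniformly recurrent subgroups (URS) and invoke the known dictionary between $C^*$-simplicity and amenable URSs. Recall that the space $\mathrm{Sub}(G) \subseteq \{0,1\}^G$ of subgroups of $G$, equipped with the Chabauty (pointwise-convergence) topology and the conjugation action, is a compact metrizable $G$-space, and that a URS is precisely a minimal closed $G$-invariant subset of $\mathrm{Sub}(G)$. Building on Kalantar--Kennedy, $G$ fails to be $C^*$-simple exactly when it admits a nontrivial amenable URS (equivalently, the Furstenberg URS $A_G$ is nontrivial). So it suffices to show that $G$ possesses a finite amenably-visible subset avoiding the identity if and only if it possesses a nontrivial amenable URS.

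For the direction ``amenable URS $\Rightarrow$ amenably-visible set,'' I would start from a nontrivial amenable URS $\mathcal{U} \subseteq \mathrm{Sub}(G)$. For each $g \neq e$ the set $\{K \in \mathrm{Sub}(G) : g \in K\}$ is clopen, and since every $K \in \mathcal{U}$ is nontrivial these sets (ranging over $g \in G \setminus \{e\}$) cover the compact set $\mathcal{U}$. Extracting a finite subcover yields $S = \{g_1, \dots, g_n\} \subseteq G \setminus \{e\}$ meeting every element of $\mathcal{U}$. Picking any $H \in \mathcal{U}$, it is amenable, and because $\mathcal{U}$ is conjugation-invariant every conjugate $H^\gamma$ again lies in $\mathcal{U}$ and hence meets $S$; thus $S$ is amenably-visible and omits $e$.

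For the converse, given a finite $S \not\ni e$ and an amenable $H$ with $S \cap H^\gamma \neq \varnothing$ for all $\gamma$, I would form the compact invariant set $\mathcal{H} = \overline{\{H^\gamma : \gamma \in G\}} \subseteq \mathrm{Sub}(G)$. Two closedness facts drive the argument: first, for finite $S$ the condition ``$K \cap S \neq \varnothing$'' is closed, since a limit of subgroups each meeting $S$ must itself contain some fixed element of $S$ by finiteness of $S$; hence every element of $\mathcal{H}$ meets $S$ and is in particular nontrivial. Second, amenability is preserved under Chabauty limits, so every element of $\mathcal{H}$ is amenable. By compactness $\mathcal{H}$ contains a minimal closed invariant subset, which is then a nontrivial amenable URS, and the dictionary above forces $G$ to be non-$C^*$-simple.

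The main obstacle is the reverse direction's two structural inputs. The minor one is the stability of amenability under Chabauty limits, needed so that the limiting subgroups remain genuinely amenable. The serious one is the deduction that a nontrivial amenable URS actually obstructs $C^*$-simplicity: this is the substantive content imported from the Furstenberg-boundary machinery of Kalantar--Kennedy and Kennedy, namely that the Furstenberg URS is the maximal amenable URS and is trivial precisely when the boundary action is free. Everything else reduces to soft point-set topology on the compact space $\mathrm{Sub}(G)$.
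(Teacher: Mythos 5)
This theorem is not proved in the paper at all: it is quoted verbatim from Kennedy \cite{Ke20}, so there is no internal proof to compare against. Your argument is a correct derivation of the stated form from the other standard formulation of Kennedy's result (namely, $G$ is $C^*$-simple iff it admits no non-trivial amenable uniformly recurrent subgroup), and it is essentially how one passes between the two equivalent statements in \cite{Ke20}; the genuinely hard content remains the imported black box, which you correctly flag. The soft steps all check out: in the forward direction, note explicitly that a URS containing $\{e\}$ must equal $\{\{e\}\}$ because $\{e\}$ is a fixed point of the conjugation action and the URS is minimal, so a non-trivial amenable URS indeed consists entirely of non-trivial subgroups and your clopen-cover/compactness extraction of $S \subseteq G \setminus \{e\}$ is valid. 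In the reverse direction, the two ``structural inputs'' you worry about are both fine for discrete groups: the set $\{K \in \mathrm{Sub}(G) : K \cap S \neq \varnothing\}$ is a finite union of clopen sets, hence clopen; and a Chabauty limit $K$ of amenable subgroups is amenable because every finite subset of $K$ is eventually contained in one of the approximating (amenable) subgroups, so $K$ is locally amenable, hence amenable. Together with Zorn's lemma to extract a minimal closed invariant subset of the orbit closure of $H$, this gives a non-trivial amenable URS, and the dictionary finishes the argument. A remark worth making: the amenably-visible condition for $(S, H)$ with $e \notin S$ is, by exactly your compactness argument, equivalent to saying that the conjugation-orbit closure of $H$ in $\mathrm{Sub}(G)$ does not contain $\{e\}$, which makes the equivalence with Kennedy's formulation transparent.
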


We will prove in Corollary \ref{cor: main non-free action} that for every finite amenably-visible subset $S$ of a countable group $G$, for a generic (resp. generic symmetric) measure $\nu$ the stabilizer of almost every point of the Poisson boundary $\partial(G, \nu)$ has non-empty intersection with $S$.

We might consider the following corollary as a strenghtening of the famous Kaimanovich-Vershik  and Rosenblatt construction \cite{KaVe83}, \cite{Ro81} of the measure on amenable groups with trivial Poisson boundary:
\begin{cor}
For a generic measure $\nu$ on the countable group $G$, the amenable radical acts trivially on the Poisson boundary $\partial(G, \nu)$.
\end{cor}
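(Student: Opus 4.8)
The plan is to deduce this corollary directly from the promised Corollary~\ref{cor: main non-free action}, reducing the statement about the amenable radical to a statement about amenably-visible singletons. The key observation is that the amenable radical $R$ of $G$ is a normal amenable subgroup, so for \emph{any} element $r \in R$, the singleton $S = \{r\}$ intersects every conjugate $H^\gamma$ of the amenable subgroup $H := R$ itself: indeed, since $R$ is normal we have $R^\gamma = R \ni r$ for all $\gamma \in G$, so $S \cap R^\gamma = \{r\} \neq \varnothing$. Thus every singleton inside the amenable radical is amenably-visible, matching the easy example noted in the text that any subset meeting a normal amenable subgroup is amenably-visible.

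First I would fix a generic measure $\nu$ obtained by intersecting the generic conditions, and apply Corollary~\ref{cor: main non-free action} to each singleton $S = \{r\}$ with $r \in R$. For a fixed such $r$, genericity gives a comeager (or conull-in-the-appropriate-Baire-sense) set of measures $\nu$ for which, for $\eta$-almost every point $\xi \in \partial(G,\nu)$, the stabilizer $\stab(\xi)$ meets $\{r\}$, i.e.\ $r \in \stab(\xi)$, which is exactly the statement that $r$ fixes almost every boundary point. The main step is then to pass from ``each fixed $r$ acts trivially for a generic $\nu$'' to ``every $r$ simultaneously acts trivially for a generic $\nu$''. Since $G$ is countable, $R$ is countable, so I would use that a countable intersection of comeager (generic) sets is again comeager: the set of measures $\nu$ for which \emph{every} $r \in R$ fixes a.e.\ boundary point is a countable intersection over $r \in R$ of generic sets, hence generic.

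Finally I would assemble the conclusion. For a generic $\nu$ in this countable intersection, for each $r \in R$ there is a conull set $\Xi_r \subseteq \partial(G,\nu)$ on which $r$ acts trivially; taking $\Xi = \bigcap_{r \in R} \Xi_r$, again a countable intersection, gives a conull set on which every element of $R$ acts trivially, which is precisely the statement that $R$ acts trivially on $\partial(G,\nu)$ (up to a null set, which is all one can ask for a measure-class action).

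The step I expect to require the most care is the interchange of the ``for a generic $\nu$'' and ``for all $r \in R$'' quantifiers, because the underlying probability space $\partial(G,\nu)$ itself depends on $\nu$; one must make sure the genericity is formulated in the fixed ambient space $\meas(G)$ (with the total variation metric, where the Baire category argument lives) uniformly enough that the countable intersection over $r$ is taken \emph{before} specializing $\nu$. Provided Corollary~\ref{cor: main non-free action} is stated so that the generic set of measures is a property of $\meas(G)$ for each fixed $S$, this is exactly the routine countable-intersection-of-comeager-sets argument, and no genuine obstacle remains.
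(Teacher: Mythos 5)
Your proposal is correct and is exactly the paper's argument: each element of the amenable radical forms a one-element amenably-visible subset (by normality), and a countable intersection of Baire-residual sets of measures is residual. You simply spell out in more detail the countable-intersection steps that the paper leaves implicit.
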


\begin{proof}
Every element of the amenable radical forms a one-element amenably-visible subset. An intersection of countably many Baire-residual sets is a Baire-residual set.
\end{proof}

This is not a coincidence, since  our construction could be considered, in a sense, an extension of that construction.

As we mentioned, Le Boudec proved in \cite{Bo17} that there are $C^*$-simple groups that have trivial amenable radical. Using the fact, that for a non-degenerate measure on a group, stabilizer of almost every point of the Poisson boundary is an amenable subgroup, we get:

\begin{cor}
Let $G$ be a non-$C^*$-simple group that has trivilal amenable radical. There is a non-degenerate measure on $G$ such that stabilizers of the points of the Poisson boundary are not essenitlly the same subgroup.
\end{cor}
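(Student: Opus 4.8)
The plan is to derive the statement from Theorem~\ref{thm: main nonfree} by showing that, under the standing hypotheses on $G$, an essentially free action and an action with essentially constant stabilizers coincide; the non-freeness guaranteed for a generic measure then automatically rules out essentially constant stabilizers.

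First I would record the equivariance of the stabilizer map. For the quasi-invariant $G$-action on $\partial(G,\nu)$ one has $\stab(g\xi)=g\,\stab(\xi)\,g^{-1}$ for every $g\in G$ and every boundary point $\xi$. Suppose now that the stabilizers are essentially the same subgroup, i.e.\ there is a subgroup $N\le G$ and a full-measure set $A\subseteq\partial(G,\nu)$ with $\stab(\xi)=N$ for all $\xi\in A$. Fix $g\in G$. Since the action is quasi-invariant, $gA$ also has full measure, so $A\cap g^{-1}A$ has full measure and in particular is nonempty; choosing $\xi$ in this set gives $\stab(\xi)=N$ and, because $g\xi\in A$, also $\stab(g\xi)=N$. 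Combined with the equivariance this yields $gNg^{-1}=N$. As $g$ was arbitrary, $N$ is a normal subgroup of $G$.

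Next I would invoke the two external inputs. By the cited fact that for a non-degenerate measure the stabilizer of almost every point of the Poisson boundary is amenable, the subgroup $N$ above is amenable. A normal amenable subgroup is contained in the amenable radical, which is trivial by assumption, so $N=\{e\}$. Thus essentially constant stabilizers force $\stab(\xi)=\{e\}$ almost everywhere, i.e.\ the action is essentially free.

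Finally I would produce the measure. The space $\meas(G)$ with the total variation metric is a complete metric space, and the non-degenerate (indeed full-support) measures form a dense $G_\delta$ subset, since $\{\nu:\nu(g)>0\}$ is open and dense for each $g\in G$. By Theorem~\ref{thm: main nonfree}, for a generic measure the action on the Poisson boundary is not essentially free; intersecting this residual set with the residual set of non-degenerate measures gives a residual, hence nonempty, family of non-degenerate measures $\nu$ for which the action is not essentially free. For any such $\nu$ the previous paragraph shows that the stabilizers cannot be essentially the same subgroup, which is exactly the assertion. The only delicate point is the passage from \emph{essentially constant stabilizer} to \emph{normal subgroup}, which relies on quasi-invariance of the boundary action together with equivariance of the stabilizer map; everything else is a combination of Theorem~\ref{thm: main nonfree}, the amenability of generic stabilizers, and triviality of the amenable radical.
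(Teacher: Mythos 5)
Your proposal is correct and follows exactly the argument the paper intends (the paper states this corollary without writing out a proof, merely pointing to the amenability of a.e.\ stabilizer for non-degenerate measures together with Theorem~\ref{thm: main nonfree}): an essentially constant stabilizer is a normal amenable subgroup, hence trivial when the amenable radical is trivial, which contradicts the generic non-freeness, and genericity of full-support measures supplies non-degeneracy. Your explicit verification that an essentially constant stabilizer must be normal, via equivariance of the stabilizer map and quasi-invariance, is the right way to fill in the step the paper leaves implicit.
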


Previouly, the only example of such phenomenon was due to Erschler an Kaimanovich \cite{ErKa24+}, they constructed a measure on the group of infinite permutations with finite support such that the action on the Poisson boundary is totally non-free in the terminology of A.M. Vershik (that is, the map that sends a point to its stabilizer is essentially bijective). 

For ICC groups there are always symmetric measures of full support such that the action of the Poisson boundary is essentailly free, this was proved in \cite{FHTF19}. It is not hard to observe that their construction could be modified to yield a dense collection of such measures. Thus, for an ICC non-$C^*$-simple group, for a generic probability measure on the group, the action on the Poisson boundary is not essentially free, but for a dense set of measures the action is essentially free.

Hartman and Kalantar gave a characterization of $C^*$-simplicity in terms of existence of so-called $C^*$-simple measures in \cite{HK23}. They also proved that the action on the Poisson boundary corresponding to a $C^*$-simple measure is essentially free, We augment their result slightly in  Section \ref{sec: essentially free} by noticing that a generic (resp. generic symmetric) probability measure on a $C^*$-simple countable group is a $C^*$-simple measure, see Corollary \ref{cor: cstar main cor}. Thus we get the following:

\begin{mytheor}
Let $G$ be a countable $C^*$-simple group. For a generic (resp. generic symmetric) probability measure $\nu$ on $G$, the action of $G$ on the Poisson boundary $\partial(G, \nu)$ is essentially free.
\end{mytheor}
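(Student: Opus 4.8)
The plan is to reduce the statement to the cited theorem of Hartman--Kalantar, according to which the action of $G$ on $\partial(G,\nu)$ is essentially free whenever $\nu$ is a $C^*$-simple measure. It therefore suffices to prove that the set of $C^*$-simple measures is Baire-residual in $\meas(G)$ (a complete, hence Baire, metric space for the total-variation distance), and likewise residual inside the closed subspace of symmetric measures. The characterization I would use is the measured counterpart of the Kalantar--Kennedy criterion: since $G$ is $C^*$-simple, the action $G \curvearrowright \partial_F G$ on the Furstenberg boundary is topologically free, so for each $g \neq e$ the fixed set $F_g = \{x \in \partial_F G : gx = x\}$ is closed and nowhere dense, and a non-degenerate $\mu$ is a $C^*$-simple measure exactly when the unique $\mu$-stationary measure $\nu_F^\mu$ on $\partial_F G$ satisfies $\nu_F^\mu(F_g)=0$ for all $g \neq e$. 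This already recovers the essential freeness of the Poisson boundary directly: the equivariant map $\pr \colon \partial(G,\mu) \to (\partial_F G, \nu_F^\mu)$ gives $\stab(\omega) \subseteq \stab(\pr(\omega))$ for a.e.\ $\omega$, so freeness downstairs forces freeness upstairs.

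With this characterization the target set becomes a countable intersection
\[
\{\mu \text{ non-degenerate}\} \cap \bigcap_{g \neq e} A_g, \qquad A_g = \{\mu : \nu_F^\mu(F_g) = 0\},
\]
and since a countable intersection of residual sets is residual it is enough to treat each $A_g$. The first ingredient is continuity of the stationary-measure map: for non-degenerate $\mu$ the stationary measure is unique, and a routine compactness-plus-uniqueness argument shows $\mu \mapsto \nu_F^\mu$ is continuous from total variation into the weak-$*$ topology on $\meas(\partial_F G)$. As $F_g$ is closed, $\mu \mapsto \nu_F^\mu(F_g)$ is then weak-$*$ upper semicontinuous, so each $\{\mu : \nu_F^\mu(F_g) \geq 1/m\}$ is closed and $A_g = \bigcap_m \{\mu : \nu_F^\mu(F_g) < 1/m\}$ is a $G_\delta$. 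The degenerate measures are easily seen to form a meager set, so they may be discarded.

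The main obstacle is density: I must show that each open set $\{\mu : \nu_F^\mu(F_g) < 1/m\}$ is dense, equivalently that arbitrarily close to any $\mu$ in total variation there is a $\mu'$ whose stationary measure puts mass less than $1/m$ on the nowhere-dense set $F_g$. This is exactly the point at which $C^*$-simplicity is used beyond bare topological freeness. The idea is to exploit minimality and strong proximality of $G \curvearrowright \partial_F G$: because $F_g$ is nowhere dense, one can choose finitely many group elements whose action contracts the bulk of any measure into the complement of $F_g$, and then perturb $\mu$ to $\mu' = (1-\varepsilon)\mu + \varepsilon\rho$, where $\rho$ is a finitely supported measure built from these elements, so that $\|\mu - \mu'\|_{\mathrm{TV}} \leq 2\varepsilon$ while the new harmonic measure is pushed off $F_g$. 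Making this contraction quantitative and uniform is the technical heart of the argument; the one further check is that $\rho$ can always be taken symmetric, so that the identical perturbation lives in the space of symmetric measures and the whole Baire-category argument runs verbatim there. Granting the perturbation lemma, each $A_g$ is a dense $G_\delta$, their intersection is residual, and Hartman--Kalantar then yields essential freeness of the Poisson boundary for a generic (resp. generic symmetric) $\nu$.
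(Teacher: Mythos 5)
Your overall architecture is sound and genuinely different from the paper's: you work on the Furstenberg boundary with stationary measures and fixed-point sets $F_g$, whereas the paper stays inside $C_r(G)$, defines ``HK-measures'' via $\lVert a^{\nu^{*n}}\rVert \to 0$ for $a \in \ker\tau$, and gets density from the Powers property plus a stopping-time decomposition of $\nu^{*n}$. Your reduction to Hartman--Kalantar, the $G_\delta$ bookkeeping, and the factor-map argument $\stab(\omega) \subseteq \stab(\pr(\omega))$ are all fine modulo standard facts about uniqueness of the stationary measure on a boundary for non-degenerate $\mu$.

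The genuine gap is the density step, which is where the theorem actually lives, and the specific mechanism you propose would fail. Writing $\mu' = (1-\varepsilon)\mu + \varepsilon\rho$ with a single finitely supported $\rho$ and expanding $\nu_F^{\mu'} = \mu'^{*n} * \nu_F^{\mu'}$, the only terms you control are those whose \emph{leftmost} factor is $\rho$: a term of the form $\mu^{*k} * \rho * (\cdots)$ contributes $\sum_h \mu^{*k}(h)\,(\rho * \sigma)(h^{-1}F_g)$, and $h^{-1}F_g = F_{h^{-1}gh}$, so $\rho$ would have to contract every measure away from all conjugates $g^{h}$ with $h$ ranging over $\supp \mu^{*k}$ for all $k$ --- an infinite family when $g$ has infinitely many conjugates --- while Powers-type averaging only produces a finitely supported measure handling finitely many nowhere dense sets at a time. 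Keeping only the leftmost-$\rho$ terms yields the useless bound $\nu_F^{\mu'}(F_g) \leq (1-\varepsilon) + \varepsilon\epsilon_0$. The paper resolves exactly this obstruction by replacing your single $\rho$ with an infinite hierarchy of contracting measures $\mu_i$, each chosen (by the strengthened Powers lemma) to handle all conjugates by words from the finite but growing prefix set $A_{i-1}^{<i}$, and by a stopping-time decomposition of $\nu^{*n}$ guaranteeing that with probability at least $1-\varepsilon$ the walk performs a contracting step while its prefix is still inside the controlled set. Some device of this kind --- contracting measures adapted to longer and longer prefixes, not one finitely supported perturbation --- is unavoidable, so your ``quantitative and uniform'' contraction cannot be filled in as sketched.
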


Combining the two results with the classical Baire category theorem, we get a characterization of $C^*$-simplicity in terms of the freeness of the action of $G$ on the Poisson boundary $\partial(G, \nu)$ for a generic probability measure $\nu$ on $G$.

\begin{mytheor}
A countbale group $G$ is $C^*$-simple iff for a generic (symmetric) probability measure $\nu$ on it, the action on the Poisson boundary $\partial(G, \nu)$ is essentailly free. A countbale group $G$ is not $C^*$-simple iff for a generic (symmetric) probability measure $\nu$ on it, the action on the Poisson boundary $\partial(G, \nu)$ is not essentailly free. 
\end{mytheor}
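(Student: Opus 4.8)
The plan is to derive this characterization purely formally from the two preceding main theorems together with the Baire category theorem; all the substantive content already lives in those theorems, and what remains is a short topological combination, exactly as flagged in the text.

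First I would fix the ambient setting. Writing $\meas(G)$ for the space of probability measures with the total variation metric, I would note that $\meas(G)$ is isometric to the set of nonnegative $\ell^1(G)$-functions of norm one, a closed subset of $\ell^1(G)$ and hence a complete metric space; the same holds for the closed subspace $\meas_{\mathrm{sym}}(G)$ of symmetric measures. By the classical Baire category theorem both are Baire spaces, so a comeager (Baire-residual) set is dense and, crucially, any two comeager sets have comeager, hence non-empty, intersection (the space itself being non-empty and non-meager in itself). Throughout, \emph{generic} means \emph{on a comeager set}, consistent with the usage in the earlier corollary. For each group $G$ I would split $\meas(G)$ into the set $A$ of measures $\nu$ for which the $G$-action on $\partial(G,\nu)$ is essentially free and its complement $B$; since a given action either is or is not essentially free, $A$ and $B$ partition the space, and likewise inside $\meas_{\mathrm{sym}}(G)$.

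The forward implications are then immediate restatements. If $G$ is $C^*$-simple, the second main theorem says $A$ is comeager, i.e. the action is generically essentially free. If $G$ is not $C^*$-simple, the first main theorem (Theorem~\ref{thm: main nonfree}) says $B$ is comeager, i.e. the action is generically not essentially free. The same statements hold verbatim inside $\meas_{\mathrm{sym}}(G)$, since both theorems are stated in their ``generic symmetric'' versions as well. For the converse implications I would argue by contradiction using disjointness of $A$ and $B$. Suppose the action is generically essentially free ($A$ comeager) yet $G$ is not $C^*$-simple; then by the first main theorem $B$ is also comeager, so $A \cap B$ is comeager and therefore non-empty, contradicting $A \cap B = \varnothing$. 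Hence generic essential freeness forces $C^*$-simplicity. Symmetrically, if the action is generically not essentially free ($B$ comeager) while $G$ is $C^*$-simple, then $A$ is comeager by the second main theorem, again contradicting $A \cap B = \varnothing$. This yields both biconditionals, and the identical argument inside $\meas_{\mathrm{sym}}(G)$ settles the symmetric case.

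I do not anticipate a genuine obstacle: the only point requiring any care is the verification that $\meas(G)$ and $\meas_{\mathrm{sym}}(G)$ are Baire spaces, which is what makes ``generic'' non-vacuous and what guarantees that the two complementary generic behaviours cannot coexist. Everything else is the logical combination of the two main theorems; the apparent strength of the statement is entirely inherited from them.
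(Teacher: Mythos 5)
Your proposal is correct and follows exactly the route the paper takes: the paper's entire proof is the one-line remark that the theorem follows by ``combining the two results with the classical Baire category theorem,'' and your write-up is precisely the careful elaboration of that combination (completeness of $\meas(G)$ and $\meas_{\mathrm{sym}}(G)$ in total variation, comeager sets must intersect, hence the two generic behaviours are mutually exclusive and each forward implication upgrades to a biconditional). Nothing is missing and nothing is done differently.
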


{\em Acknowledgements.} I would like to thank Vadim Kaimanovich and Romain Tessera for discussions. I'm grateful to Anna Erschler for comments and suggestions.
I finised writing this text during my visit to RIMS Kyoto hosted by Narutaka Ozawa. This text is an expansion upon my preprint ``Non-C*-simple groups admit non-free actions on their Poisson boundaries'' where only one side was provided.

\section{A criterion for non-triviality of stabilizers}\label{sec: stab nontriv}


The next lemma provides us with an important criterion of non-freeness of the action on the Poisson boundary.

\begin{lem}\label{lem: asymptotic disjointness}
Let $\nu$ be a measure of full support on a group $G$. Let $S$ be a finite subset of $G$ and assume that there is a positive-measure subset of elements of the Poisson boundary $\partial(G, \nu)$ those $G$-stabilizers are disjoint with $S$. For every $\varepsilon > 0$ there is probability measure $\mu$ on $G$ such that 
\begin{equation*}
\liminf_{n \to \infty}\lvert t \cdot \mu * \nu^{*n} - \mu * \nu^{*n} \rvert \geq 2 - \varepsilon
\end{equation*}
for all $t \in S$.
\end{lem}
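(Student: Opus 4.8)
The plan is to pass from the finite-time convolutions to the boundary measures and reduce everything to a separation statement on $\partial(G,\nu)$. Write $\lambda = (\pr_{\partial})_* \eta$ for the harmonic measure, i.e.\ the distribution of the limit boundary point $\xi_\infty = \pr_{\partial}((Z_n)_n)$ of the $\nu$-walk started at the identity; it is $G$-quasi-invariant and stationary, $\nu * \lambda = \lambda$. Since left translation by $t$ only affects the left factor, $t\cdot(\mu * \nu^{*n}) = (t\cdot\mu) * \nu^{*n}$, where $t\cdot\mu$ is the translate of $\mu$. The first step is the elementary lower bound
\begin{equation*}
\liminf_{n\to\infty}\, \lvert \rho_1 * \nu^{*n} - \rho_2 * \nu^{*n}\rvert \;\geq\; \lvert \rho_1 * \lambda - \rho_2 * \lambda\rvert
\end{equation*}
for any probability measures $\rho_1,\rho_2$ on $G$. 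I would prove this by testing against harmonic functions: for $f \in L^\infty(\partial)$ with $\|f\|_\infty \le 1$, its Poisson transform $\bar f(g) = \int_\partial f \, d(g\lambda)$ is $\nu$-harmonic by stationarity, hence $\langle \rho * \nu^{*n}, \bar f\rangle = \langle \rho, \bar f\rangle = \int_\partial f \, d(\rho * \lambda)$ for every $n$; bounding $\langle \rho_1*\nu^{*n}-\rho_2*\nu^{*n}, \bar f\rangle$ by the total variation and taking the supremum over $f$ gives the claim. Applying this with $\rho_1 = t\cdot\mu$, $\rho_2 = \mu$ and noting $(t\cdot\mu)*\lambda = t\cdot(\mu*\lambda)$, it suffices to produce a probability measure $\mu$ with $\lvert t\cdot(\mu * \lambda) - \mu * \lambda\rvert \geq 2-\varepsilon$ for all $t \in S$.

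So the problem becomes: find $\theta = \mu * \lambda$ in the closed convex hull of $\{g\lambda : g \in G\}$ that is nearly singular to each of its $S$-translates. By hypothesis there is a set $E \subseteq \partial$ with $\lambda(E) > 0$ on which every $t \in S$ acts without fixed points. The key construction is a set $A \subseteq E$ with $\lambda(A) > 0$ and $\lambda(A \cap tA) = 0$ for all $t \in S$. To get this I would realize the non-singular $G$-action on $(\partial,\lambda)$ by a continuous action on a compact metric space (possible since $G$ is countable), pick a density point $\xi_0 \in E$ with $t\xi_0 \neq \xi_0$ for all $t \in S$, and choose a small neighborhood $U$ of $\xi_0$ with $U \cap tU = \varnothing$ for every $t$ in the finite set $S$; intersecting with $E$ and using that $\xi_0$ is a density point yields $A$ with $\lambda(A)>0$ and the desired essential disjointness.

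It remains to realize a probability measure concentrated on $A$ as some $\mu * \lambda$. Here I would use martingale convergence: the function $h_A(g) = (g\lambda)(A)$ is the bounded harmonic extension of $\mathbf{1}_A$, so $h_A(Z_n) \to \mathbf{1}_A(\xi_\infty)$ for $\eta$-almost every path. Let $B = \{\xi_\infty \in A\}$, which has $\eta(B) = \lambda(A) > 0$, and set $\mu = \mu_n := \mathrm{Law}(Z_n \mid B)$, so that $\mu_n * \lambda = \expect[\,Z_n \lambda \mid B\,]$ and $(\mu_n * \lambda)(A) = \expect[h_A(Z_n)\mid B] \to 1$ by bounded convergence. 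Fixing $n$ large so that $\theta := \mu_n * \lambda$ satisfies $\theta(A) \geq 1-\delta$, and observing that $\theta \ll \lambda$ (a countable convex combination of the quasi-invariant measures $g\lambda$), we get $\theta(A \cap tA) = 0$ and hence $\theta(t^{-1}A \cap A)=0$ for all $t \in S$; a short estimate of the overlap mass $\|\theta \wedge (t\cdot\theta)\| \le 2\delta$ then gives $\lvert t\cdot\theta - \theta\rvert \ge 2 - 4\delta$. Taking $\delta = \varepsilon/4$ completes the argument.

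I expect the main obstacle to be the construction of the set $A$ with $\lambda(A \cap tA) = 0$ for all $t \in S$ simultaneously, since fixed-point-freeness alone does not make $A \cap tA$ small for a given $A$ (consider an irrational rotation). This is exactly where the no-fixed-point hypothesis on $E$ must be used, via passing to a continuous model of the action and localizing near a density point; the rest of the proof is then a routine combination of the harmonic-function lower bound and boundary martingale convergence.
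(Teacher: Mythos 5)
Your proposal is correct, and at its core it builds exactly the same object as the paper: your $\mu = \mathrm{Law}(Z_n \mid \xi_\infty \in A)$ is precisely the paper's $\mu = \pr_{G_i}(f^i\eta)$, where $f^i = \expect(f \mid \sA_{[0,i]})$ is the conditional density of the normalized restriction of $\eta$ to the event that the walk converges into the separated boundary set. The difference is in how the asymptotic separation of $t\cdot\mu*\nu^{*n}$ from $\mu*\nu^{*n}$ is certified: the paper stays on the path space and proves an exact identity $\lvert \expect(f^i_t - f^i \mid \sA_{[j,\infty)})\rvert_{L^1} = \lvert t\cdot\mu*\nu^{*(j-i)} - \mu*\nu^{*(j-i)}\rvert$, then compares with the tail-measurable limits $f, f_t$ via martingale convergence (a 0--2 law argument); you instead push everything to the boundary and use the Poisson-transform duality, which gives the cleaner and slightly stronger statement that $\lvert t\cdot\mu*\nu^{*n} - \mu*\nu^{*n}\rvert \geq \lvert t\cdot(\mu*\lambda) - \mu*\lambda\rvert$ for \emph{every} $n$, not just in the liminf. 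You also supply a proof of the separated-set step that the paper only asserts (the existence of $Q'$ with $Q' \cap tQ' = \varnothing$ for all $t \in S$); one small repair there: a Lebesgue density theorem is not available in an arbitrary compact model, but you do not need it --- it suffices to take $\xi_0$ in the (nonempty) topological support of $\lambda\vert_E$, or to cover $E$ by countably many basic neighborhoods $U$ with $U \cap tU = \varnothing$ for all $t \in S$ and pick one meeting $E$ in positive measure. With that cosmetic fix the argument is complete.
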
 
\begin{proof}
The argument is similar in flavor to the proof of the 0-2 law in \cite{Ka92}.

We first note that there is a positive-measure subset $Q'$ of $\partial(G, \nu)$ such that $tQ'$ does not intersect $Q'$ for all $t \in S$. This corresponds to a subset $Q$ of $\Omega$. Let $\eta_Q$ be the normalized restriction of $\eta$ to $Q$. Note that measures $\eta_Q$ and $t \cdot \eta_Q$ are supported on disjoint sets $Q$ and $tQ$, so $\lvert t \cdot \eta_Q  - \eta_Q\rvert  = 2$. Now let $f$ be the Radon-Nidodym derrivative of $\eta_Q$ with respect to $\eta$: $\eta_Q = f \eta$. In the same way, let $f_t$ be the derrivative of $t \cdot \eta_Q$ with respect to $\eta$ for all $t \in G$:
$t \cdot \eta_Q = f_t \eta$. We observe that
\begin{equation}\label{eq: asymptotic disjointness in the limit}
\lvert f - f_t\rvert_{L^1} = 2,
\end{equation}
for all $t \in S$. Using the martingale convergence theorem, we get that for big enough $i$ we have $\vert \expect(f \vert \sA_{[0,i]}) - f \rvert_{L^1} < \varepsilon/2$. 
Denote $f^i =  \expect(f \vert \sA_{[0,i]})$. 
and also $f^i_t$ the Radon-Nikodym derrivative of $t \cdot f^i\eta$ over $\eta$: $f^i_t \eta = t \cdot f^i \eta$. 

We claim that 
\begin{equation}\label{eq: asymptotic disjointness almost}
\lvert \expect(f^i_t \vert \sA_{[j,\infty]}) - \expect(f^i \vert \sA_{[j,\infty]})\rvert_{L^1} >  2 - \varepsilon,
\end{equation}
for $t \in S$.
To get this we observe that 
$$
\lvert f^i - f \rvert_{L^1} < \varepsilon/2.
$$
and 
$$
\lvert  f^i_t - f_t \rvert_{L^1} = \lvert t \cdot f^i \eta - t \cdot \eta_Q\rvert = \lvert f^i \eta - \eta_Q\rvert = \lvert f^i - f \rvert_{L^1} < \varepsilon/2.
$$
So 
\begin{multline*}
\lvert \expect(f^i_t \vert \sA_{[j,\infty]}) - \expect(f^i \vert \sA_{[j,\infty]})\rvert_{L^1} \\= \lvert \expect(f^i_t  - f^i \vert \sA_{[j,\infty]})\rvert_{L^1} > \lvert\expect(f_t  - f \vert \sA_{[j,\infty]})\rvert_{L^1} - \varepsilon.
\end{multline*}
Now, recalling that both $f_t$ and $f$ are $\sA_{[j,\infty]}$-measurable (in fact, $\sA_{\infty}$-measurable), and using \eqref{eq: asymptotic disjointness in the limit}, we get that the last term is equal to $2 - \varepsilon$, finishing the proof of \eqref{eq: asymptotic disjointness almost}.

Let $\mu = \pr_{G_i}(f^i \eta)$. 

We now claim that 
\begin{equation}
\lvert \expect(f^i_t \vert \sA_{[j,\infty]}) - \expect(f^i \vert \sA_{[j,\infty]})\rvert_{L^1} = \lvert \mu * \nu^{*(j-i)} - t \cdot \mu * \nu^{*(j-i)}\rvert,
\end{equation}
for all $t \in G$ and $j > i$.
this will finish the proof due to \eqref{eq: asymptotic disjointness almost}.

Observe that due to the Markov property, $f^i$ is $\sA_i$-measurable. Also, $f^i_t$ is $\sA_{[1,i]}$-measurable.
Note that $\pr_{G_j}(f^i\eta) = \mu * \nu^{*(j-i)}$ and $\pr_{G_j}(f_t^i\eta) = t \cdot \mu * \nu^{*(j-i)}$. Now it is enough to observe that 
$$\lvert \expect(f^i_t \vert \sA_{[j,\infty]}) - \expect(f^i \vert \sA_{[j,\infty]})\rvert_{L^1} = \lvert \pr_j(f^i\eta - f^i_t\eta)\rvert$$


\end{proof}

\section{A decomposition lemma}

This lemma will be instrumental for our construction.

\begin{lem}\label{lem: second decomp}
Let $(\upsilon_i)_{i=0}^{\infty}$ be a probability vector such that for the integer-valued i.i.d. process $(K_i)$ with distribution $(\upsilon_i)$ we have $\limsup_{i \to \infty}(K_i - i) > 0$. Let $(\zeta'_i), (\zeta''_i)$ be some probability measures on the group $G$ and $(\beta_i)$ be a sequence of numbers $0 \leq b_i \leq 1$ such that $\liminf_i{\beta_i} > 0$.
Denote $A_n = \bigcup_{i \leq n}\big(\supp \zeta'_i \cup \supp \zeta''_i\big)$, and $$\nu = \sum_{i=0}^{\infty}\upsilon_i(\beta_i\cdot \zeta'_i + (1 - \beta_i) \cdot \zeta''_i),$$ clearly; $\nu$ is  probability measure. We claim that for every $\varepsilon > 0$ and $M > 0$ there is $N$ such that for all $n > N$ there is a decomposition
\begin{equation}\label{eq: second decomp}
\nu^{*n} = \eta_{n, \varepsilon} + \sum_{i \geq M}\sum_{q' \in A^{<i-1}_{i-1}} \sum_{q'' \in G} \alpha_{i,q',q''} \cdot q' * \zeta'_i * q''
\end{equation}
where $\eta_{n,\varepsilon}$ is a positive measure with $\lvert \eta_{n, \varepsilon}\rvert < \varepsilon$, $\alpha_{i,q',q''} \geq 0$, and $\lvert \eta_{n,\varepsilon}\rvert + \sum_{i,q',q''}\alpha_{i,q',q''} = 1$.
\end{lem}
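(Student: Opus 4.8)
The plan is to realize $\nu^{*n}$ as the law of a random product and to isolate, by a stopping-time argument, a single ``good'' factor coming from some $\zeta'_i$ with $i \geq M$, sitting behind a short prefix. Concretely, I would model a $\nu$-sample in two stages: draw an index $K \sim (\upsilon_i)$, then a bit $B$ that is \emph{prime} with probability $\beta_K$ and \emph{double-prime} otherwise, and finally a group element from $\zeta'_K$ or $\zeta''_K$ accordingly. Running this independently yields $(K_l, B_l)_{l \geq 1}$ together with samples $Y_l \in G$, and $W_n = Y_1 \cdots Y_n$ has law $\nu^{*n}$. The point is that when $B_l$ is prime and $K_l = i$ we have $Y_l \sim \zeta'_i$, while the support of any factor $Y_l$ with $K_l = m$ lies in $A_m$.

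I would call $j$ a \emph{good time} if $j$ is a strict record of $(K_l)$ (that is, $K_j > K_l$ for all $l < j$), if $K_j > j$, and if $K_j \geq M$. At a good time with $K_j = i$ the prefix $W_{j-1} = Y_1 \cdots Y_{j-1}$ is a product of $j - 1 < i - 1$ factors, each with index $K_l < i$, hence $\supp$ of each factor lies in $A_{i-1}$ and $W_{j-1} \in A^{<i-1}_{i-1}$; this is exactly the shape required of the left factor $q'$ in \eqref{eq: second decomp}. I would then set $\sigma = \min\{ j : j \text{ is a good time and } B_j = \text{prime}\}$ and condition on $\sigma$. Conditionally on $\sigma = j$, $K_j = i$ and $W_{j-1} = q'$, the factor $Y_j$ is distributed as $\zeta'_i$ and the suffix $Y_{j+1}\cdots Y_n$ is independent with law $\nu^{*(n-j)}$; expanding the suffix over its atoms $q''$ produces precisely the terms $q' * \zeta'_i * q''$ with nonnegative weights $\alpha_{i,q',q''}$, while the event $\{\sigma > n\}$ contributes the positive error $\eta_{n,\varepsilon}$ of total mass $\Pr(\sigma > n)$.

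It remains to prove $\sigma < \infty$ almost surely and $\Pr(\sigma > n) \to 0$. The key step, and the main obstacle, is showing there are infinitely many good times; this is precisely where the hypothesis $\limsup_i (K_i - i) > 0$ enters. Since $K_i - i$ is integer valued, $K_i > i$ for infinitely many $i$, and along these $K_i \to \infty$, so $K$ is unbounded and the record values tend to infinity. For a time $j$ with $K_j > j$ the last record time $\rho(j) \leq j$ satisfies $K_{\rho(j)} = \max_{l \leq j} K_l \geq K_j > j \geq \rho(j)$, so $\rho(j)$ is itself a good time; and since $K_{\rho(j)} > j \to \infty$ forces $\rho(j) \to \infty$, these produce infinitely many distinct good times. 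Finally, $(B_l)$ is independent of $(K_l)$, and at each good time the bit is prime with probability $\beta_{K_j} \geq \liminf_i \beta_i > 0$ (after enlarging $M$ so that this lower bound holds for all indices $\geq M$, which only shrinks the index range and is harmless); with infinitely many independent such trials a prime bit occurs almost surely, giving $\sigma < \infty$. Hence $\Pr(\sigma > n) \to 0$, so $\lvert \eta_{n,\varepsilon}\rvert < \varepsilon$ for all $n$ past some $N$, and the mass identity $\lvert \eta_{n,\varepsilon}\rvert + \sum_{i,q',q''}\alpha_{i,q',q''} = \Pr(\sigma > n) + \Pr(\sigma \leq n) = 1$ is automatic.
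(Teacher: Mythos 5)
Your proposal is correct and follows essentially the same strategy as the paper: couple the index process $(K_l)$ with the prime/double-prime bits and the group samples, stop at the first suitable time producing a $\zeta'_i$-factor with $i \geq M$, and let the event $\{\sigma > n\}$ supply the error term $\eta_{n,\varepsilon}$. Your version is in fact slightly more careful than the paper's: you require the good time to be a strict record of $(K_l)$, which is genuinely needed to guarantee that every prefix factor has index $< i$ and hence support in $A_{i-1}$ (the paper's stopping time only demands $K_t > t$, $K_t \geq M$, $Y_t = 1$, which does not by itself exclude an earlier $l$ with $K_l \geq K_t$ but $Y_l = 2$), and your record-time argument also substantiates the assertion, left unproved in the paper, that the stopping time is almost surely finite.
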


\begin{proof}
We consider the i.i.d. process $(K_i)$.
Let $(Y_i)$ be a process with values in the set $\{1,2\}$, where each $Y_i$ is coupled to corresponding $K_i$ in such a way that $Y_i = 1$ with probability $\beta_{K_i}$ and $Y_i = 2$ with probability $1 - \beta_{K_i}$. Next, let $X_i$ for each $i$ be distibuted according to $\zeta'_{K_i}$ if $Y_i = 1$ and according to $\zeta''_{K_i}$ if $Y_i = 2$.
This way we get that $(X_i)$ is an i.i.d. process with each $(X_i)$ having the distribution $\nu$.
We define the stopping time $T$ to be the first $t$ such that $K_t > t$, $K_t \geq M$ and $Y_t = 1$. Note that this is indeed a stopping time and it is finite almost surely. Take $N$ such that $T > N$ with probability smaller than $\varepsilon$. Take initial segment $k_1, \ldots, k_n$ with $n > N$ of a trjectory of the porcees $(K_i)$ such that the stopping time $T = t \leq n$ (the probaility of the latter event is bigger than $1 - \varepsilon$). Take the relative realization of $X_1, \ldots , X_{t-1}$, these are some elements $x_1, \ldots, x_{t-1} \in A_{k_{t-1}}$, and the relative realization of $X_{t+1}, \ldots , X_n$, these are elements $x_{t+1}, \ldots,x_n \in G$. Now observe that the relative ditribution of $X_t$ is $\mu'_{k_t}$ (since $Y_t = 1$ by the definition of the stopping time). This means that for the product $X_1 \cdot \ldots \cdot X_n$ we have the distribution $q' * \mu_{i} * q''$, where $i = k_t$, $q' \in A_{i-1}^{<t} \subset  A_{i-1}^{<i-1}$ (since $i = k_t > t$ by definition of the stopping time). It remains to note that $\eta_{n, \varepsilon}$ is the leftower measure correponding the event $T > n$.
\end{proof}

\section{Generic measures have non-trivial stabilizers}

Let $G$ be a countable non C*-simple group. Let $(\sigma_i)$ be a dense in the total variation norm sequence of probability mesures on $G$ each of which has finite support. Let $S$ be a finite non-empty amenably-visible set.
We denote $\clNF_S$ the set of all such probability measures $\nu$ on $G$ that for every $\sigma_i$ there is $n$ and $s \in S$ such that 
$$
\lvert s \cdot \sigma_i \cdot \nu^{*n} - \sigma_i \cdot \nu^{*n}\rvert < 2 \Big(1 - \frac{1}{2\lvert S \rvert}\Big).
$$

Note that for a fixed $\sigma_i$, this requirements defines an open set of measures, hence we get:

\begin{lem}\label{lem: NF is Gdelta}
Class $\clNF_S$ forms a $G_{\delta}$ subset of $\meas(G)$, as well as of the subspace of symmetric measures.
\end{lem}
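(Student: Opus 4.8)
The proof rests on the single observation flagged just before the statement: for each fixed $\sigma_i$, the set of $\nu$ for which the defining inequality holds for \emph{some} $n$ and \emph{some} $s \in S$ is open. Granting this, since $\clNF_S$ is by construction the intersection over $i \in \N$ of these sets, it is a countable intersection of open sets, hence $G_\delta$. The symmetric case follows verbatim once one notes that the set of symmetric probability measures is closed in $\meas(G)$ (it is the fixed-point set of the total-variation isometry $\nu \mapsto \check\nu$, $\check\nu(g) = \nu(g^{-1})$), so the same sets, intersected with this subspace, are relatively open and their intersection is $G_\delta$ in the relative topology.

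To establish openness, write $c = 2\big(1 - \tfrac{1}{2\lvert S\rvert}\big)$ and, for fixed $i$, $n \in \N$, and $s \in S$, set
$$\Phi_{i,n,s}(\nu) = \lvert s \cdot (\sigma_i * \nu^{*n}) - \sigma_i * \nu^{*n}\rvert.$$
The plan is to show that $\Phi_{i,n,s} \colon \meas(G) \to \R$ is continuous for the total variation norm. Then $\Phi_{i,n,s}^{-1}\big([0,c)\big)$ is open, and the set attached to $\sigma_i$, namely $\bigcup_{n}\bigcup_{s \in S}\Phi_{i,n,s}^{-1}\big([0,c)\big)$, is a countable union of open sets (recall $S$ is finite), hence open, which is exactly what is needed.

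Continuity of $\Phi_{i,n,s}$ factors through three elementary properties of the total variation norm. First, convolution is submultiplicative, $\lvert \mu * \lambda\rvert \leq \lvert \mu\rvert\,\lvert\lambda\rvert$; applying this to the telescoping identity
$$\nu^{*n} - \nu'^{*n} = \sum_{k=0}^{n-1}\nu^{*k} * (\nu - \nu') * \nu'^{*(n-1-k)}$$
and using $\lvert \nu^{*k}\rvert = \lvert \nu'^{*(n-1-k)}\rvert = 1$ gives $\lvert \nu^{*n} - \nu'^{*n}\rvert \leq n\,\lvert \nu - \nu'\rvert$, so $\nu \mapsto \nu^{*n}$ is Lipschitz. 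Second, left convolution by the fixed measure $\sigma_i$ and left translation by $s$ are $1$-Lipschitz (indeed isometric) on measures. Third, the norm itself is $1$-Lipschitz. Composing these, $\Phi_{i,n,s}$ is Lipschitz, a fortiori continuous. I do not anticipate a genuine obstacle here; the only point deserving care is that the Lipschitz constant of the $n$-fold convolution map grows with $n$, but since openness is tested at each fixed $n$ separately this is harmless, and the displayed telescoping together with submultiplicativity disposes of it completely.
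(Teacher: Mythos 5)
Your proposal is correct and follows the same route as the paper, which simply records the observation that for each fixed $\sigma_i$ the condition defines an open set and concludes that $\clNF_S$ is a countable intersection of such sets; you merely supply the continuity details (via the telescoping estimate for $\nu \mapsto \nu^{*n}$) that the paper leaves implicit.
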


We also note that 

\begin{lem}
If a probability measure $\nu$ belongs to $\clNF_S$ then 
$$
\lim_{i \to \infty}\lvert s \cdot \sigma \cdot \nu^{*n} - \sigma \cdot \nu^{*n}\rvert \leq 2 \Big(1 - \frac{1}{2\lvert S \rvert}\Big),
$$
for every probability measure $\sigma$ on $G$, and the limit exists.
\end{lem}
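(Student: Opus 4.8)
The plan is to prove the two assertions separately: the existence of the limit, which I would get from a monotonicity property, and the bound, which I would obtain by transferring the defining inequality of $\clNF_S$ from the dense sequence $(\sigma_i)$ to an arbitrary $\sigma$ and then passing to the limit. (I read the conclusion as: the limit exists for every $\sigma$ and every $s$, and for each $\sigma$ there is \emph{some} $s \in S$ realizing the bound, the subscript $i$ being a typo for $n$.) For brevity write $a_n^{(s)}(\sigma) = \lvert s \cdot \sigma * \nu^{*n} - \sigma * \nu^{*n}\rvert$. Two elementary facts about the total variation norm will be used throughout: left translation by a group element is an isometry, so $\lvert s \cdot \mu\rvert = \lvert \mu\rvert$, and right convolution by a probability measure is non-expansive, so $\lvert \mu * \nu\rvert \leq \lvert \mu\rvert$.

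For the existence of the limit, I would use that left translation commutes with right convolution, so that $s \cdot \sigma * \nu^{*(n+1)} - \sigma * \nu^{*(n+1)} = (s \cdot \sigma * \nu^{*n} - \sigma * \nu^{*n}) * \nu$; non-expansiveness of convolution by $\nu$ then yields $a_{n+1}^{(s)}(\sigma) \leq a_n^{(s)}(\sigma)$. Hence for each fixed $s$ and $\sigma$ the sequence $\big(a_n^{(s)}(\sigma)\big)_n$ is non-increasing and bounded below by $0$, so it converges. This already proves that the limit exists for every $\sigma$ and every $s \in S$.

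For the bound, given an arbitrary probability measure $\sigma$, I would approximate it using density: for each $k$ pick $\sigma_{i_k}$ with $\lvert \sigma - \sigma_{i_k}\rvert < 1/k$. The definition of $\clNF_S$ supplies an index $n_k$ and an element $s_k \in S$ with $a_{n_k}^{(s_k)}(\sigma_{i_k}) < 2\big(1 - \tfrac{1}{2\lvert S\rvert}\big)$. The transfer step is a reverse-triangle-inequality estimate: writing the difference $s \cdot (\sigma - \sigma_i) * \nu^{*n} - (\sigma - \sigma_i) * \nu^{*n}$ and applying the isometry and non-expansiveness facts gives $\lvert a_n^{(s)}(\sigma) - a_n^{(s)}(\sigma_i)\rvert \leq 2\lvert \sigma - \sigma_i\rvert$ for all $n$ and $s$. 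Consequently $a_{n_k}^{(s_k)}(\sigma) < 2\big(1 - \tfrac{1}{2\lvert S\rvert}\big) + 2/k$, and by the monotonicity established above, $\lim_{n \to \infty} a_n^{(s_k)}(\sigma) \leq a_{n_k}^{(s_k)}(\sigma) < 2\big(1 - \tfrac{1}{2\lvert S\rvert}\big) + 2/k$.

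Finally I would eliminate the dependence of $s_k$ on $k$ using finiteness of $S$: some $s^* \in S$ occurs for infinitely many $k$, and for that single $s^*$ the genuine limit $\lim_{n \to \infty} a_n^{(s^*)}(\sigma)$ is bounded by $2\big(1 - \tfrac{1}{2\lvert S\rvert}\big) + 2/k$ for arbitrarily large $k$, hence by $2\big(1 - \tfrac{1}{2\lvert S\rvert}\big)$, giving the asserted $s$. The only genuine subtlety is the bookkeeping of quantifiers: the index $n_k$ and the element $s_k$ furnished by the definition of $\clNF_S$ both depend on the approximating measure, and it is precisely the pigeonhole step over the finite set $S$ that lets one fix a single $s$ valid in the limit; everything else reduces to routine isometry and contraction estimates for the total variation norm.
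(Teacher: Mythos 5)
Your proof is correct and follows essentially the same route as the paper's: monotonicity in $n$ because right convolution by a probability measure is a contraction in total variation, the bound for the dense family $(\sigma_i)$ read off directly from the definition of $\clNF_S$, and a density/continuity transfer for general $\sigma$. The only difference is that you spell out the $2$-Lipschitz estimate and the pigeonhole over the finite set $S$ that the paper's one-line appeal to ``continuity'' leaves implicit.
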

\begin{proof}
The existence of the limit follows from the monotonicity of the quantity and the latter, in turn, is implied by the fact that the convolution with a probability measure is a Markov operator. For $\sigma = \sigma_i$ the statement is now trivial and for other probability measures we deduce it from continuity.
\end{proof}

As a trivial consequence of Lemma \ref{lem: asymptotic disjointness}, we get the following:

\begin{lem}\label{lem: NF stabilizers}
If a measure $\nu$ on the group $G$ is non-degenerate and belongs to $\clNF_{S}$ for some finite non-empty subset $S$ of $G$, then the stabilizer of almost every point of the Poisson boundary $\partial(G, \nu)$ has non-empty intersection with $S$.
\end{lem}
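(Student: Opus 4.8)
The plan is to prove the contrapositive by a direct contradiction, playing Lemma \ref{lem: asymptotic disjointness} against the quantitative threshold built into the definition of $\clNF_S$. First I would negate the conclusion: if it is \emph{not} true that the stabilizer of almost every point of $\partial(G,\nu)$ meets $S$, then there is a \emph{positive-measure} set of boundary points whose $G$-stabilizer is \emph{disjoint} from $S$. This is exactly the hypothesis of Lemma \ref{lem: asymptotic disjointness}, so that lemma becomes applicable (here one uses that $\nu$ is non-degenerate, so that $\partial(G,\nu)$ is the tail boundary on which the lemma operates).

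Next I would feed a sufficiently small $\varepsilon$ into Lemma \ref{lem: asymptotic disjointness}. Concretely, fix any $\varepsilon$ with $0<\varepsilon<1/\lvert S\rvert$, and let $\mu$ be the probability measure that lemma produces, so that $\liminf_{n\to\infty}\lvert t\cdot\mu*\nu^{*n}-\mu*\nu^{*n}\rvert \geq 2-\varepsilon$ for \emph{every} $t\in S$. On the other hand, since $\nu\in\clNF_S$, the preceding lemma applied to the probability measure $\sigma=\mu$ yields \emph{some} $s\in S$ with $\lim_{n\to\infty}\lvert s\cdot\mu*\nu^{*n}-\mu*\nu^{*n}\rvert \leq 2\bigl(1-\tfrac{1}{2\lvert S\rvert}\bigr)=2-\tfrac{1}{\lvert S\rvert}$. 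Taking $t=s$ and combining (the $\liminf$ lower bound with the $\lim$ upper bound) gives $2-\varepsilon\leq 2-1/\lvert S\rvert$, i.e.\ $\varepsilon\geq 1/\lvert S\rvert$, contradicting the choice of $\varepsilon$. Hence the assumed positive-measure family of stabilizers disjoint from $S$ cannot exist, which is the assertion.

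Since both inputs are already established, the proof is essentially just this bookkeeping, which is why it is labelled a trivial consequence, and I do not expect a genuine obstacle. The one point that needs care—and the only place I anticipate friction—is matching hypotheses: Lemma \ref{lem: asymptotic disjointness} is stated for measures of \emph{full support}, whereas here $\nu$ is only assumed non-degenerate, so I would want to confirm that the argument of that lemma in fact uses non-degeneracy only (to identify the Poisson boundary with the tail boundary) rather than full support proper. The remaining delicate steps are purely logical: correctly translating ``\emph{not} almost every stabilizer meets $S$'' into a positive-measure disjointness statement, and making the quantitative choice $\varepsilon<1/\lvert S\rvert$ so that the lower bound $2-\varepsilon$ from Lemma \ref{lem: asymptotic disjointness} sits \emph{strictly above} the $\clNF_S$ threshold $2-1/\lvert S\rvert$, which is precisely what forces the contradiction.
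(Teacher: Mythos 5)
Your proposal is correct and is precisely the argument the paper intends: it gives no proof, calling the lemma a ``trivial consequence'' of Lemma \ref{lem: asymptotic disjointness}, and your bookkeeping (negate to get a positive-measure set of stabilizers disjoint from $S$, take $\varepsilon<1/\lvert S\rvert$, and play the $\liminf$ lower bound against the monotone-limit upper bound $2-1/\lvert S\rvert$ coming from $\clNF_S$) is exactly the intended combination. Your side remark about full support versus non-degeneracy is also on target: the proof of Lemma \ref{lem: asymptotic disjointness} only needs the identification of the Poisson boundary with the tail boundary, which the paper notes holds for non-degenerate measures.
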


We aim to prove that this class is in fact dense among the probability measures on $G$.

We will prove the following: 

\begin{lem}\label{lem: main non-free construction}
Let $\nu'$ be any probability measure on the group $G$. Let $S$ be a finite anemably-visible sibset of $G$. There is a measure $\nu''$ on $G$ such that for every $0 < \theta  < 1$ we have that $\theta \nu'' + (1 - \theta) \nu' \in \clNF_S$.
\end{lem}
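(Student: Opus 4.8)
The plan is to realize $\nu''$ as a \emph{layered} measure tailored to Lemma \ref{lem: second decomp}, so that the powers of $\theta\nu''+(1-\theta)\nu'$ decompose into translated copies of almost-invariant measures on the amenable subgroup witnessing amenable-visibility of $S$. Since $S$ is amenably-visible, fix an amenable subgroup $H \leq G$ with $S^\gamma \cap H \neq \varnothing$ for every $\gamma \in G$, writing $s^\gamma = \gamma^{-1}s\gamma$. I would fix once and for all an infinite-mean probability vector $(\upsilon_i)$, e.g.\ $\upsilon_i \propto i^{-2}$, so that the associated i.i.d.\ process $(K_i)$ satisfies $\limsup_i (K_i - i) > 0$ almost surely by the second Borel--Cantelli lemma. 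The measures $\zeta'_i$ will be finitely-supported F{\o}lner averages on $H$, chosen inductively, and I set $\nu'' = \sum_i \upsilon_i \zeta'_i$; note this does \emph{not} depend on $\theta$. Taking $\beta_i \equiv \theta$ and choosing finitely-supported $\zeta''_i$ with $\sum_i \upsilon_i \zeta''_i = \nu'$, one has for every $\theta \in (0,1)$
\[
\theta \nu'' + (1-\theta)\nu' = \sum_i \upsilon_i\big(\theta \zeta'_i + (1-\theta)\zeta''_i\big),
\]
which is precisely the measure produced by Lemma \ref{lem: second decomp} with $\liminf_i \beta_i = \theta > 0$.

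To keep the left parts $q'$ of the decomposition ranging over a finite set at each level I must ensure every $A_i$ is finite, so I disintegrate $\nu'$ into \emph{finitely-supported} pieces: enumerating $\supp \nu' = \{h_1, h_2, \dots\}$ and using the quantile coupling between the partitions of $[0,1]$ induced by $(\nu'(h_j))_j$ and by $(\upsilon_i)_i$, I obtain finitely-supported probability measures $\zeta''_i$ with $\sum_i \upsilon_i \zeta''_i = \nu'$, each level meeting only finitely many $h_j$. Fixing an enumeration $G = \{g_1, g_2, \dots\}$ and a sequence $r(i) \to \infty$, I choose $\zeta'_i$ inductively: with $A_{i-1} = \bigcup_{l \le i-1}(\supp \zeta'_l \cup \supp \zeta''_l)$ finite, I let
\[
E_i = \big\{\, s^{\tilde q} : \tilde q \in \{g_1, \dots, g_{r(i)}\} \cdot A_{i-1}^{<i-1},\ s \in S \,\big\} \cap H,
\]
a finite subset of $H$, and pick a finitely-supported probability measure $\zeta'_i$ on $H$ with $\lvert h \zeta'_i - \zeta'_i \rvert < \delta_i$ for all $h \in E_i$, where $\delta_i \to 0$; such $\zeta'_i$ exists by the F{\o}lner property of $H$. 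This is a genuine induction, as $E_i$ depends only on the data of levels $< i$.

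For the membership $\nu_\theta := \theta\nu'' + (1-\theta)\nu' \in \clNF_S$ I fix $\sigma_j$ from the dense sequence, with finite support $D_j$, and small $\varepsilon, \delta > 0$. Choosing $M$ so large that $D_j \subseteq \{g_1, \dots, g_{r(i)}\}$ and $\delta_i < \delta$ for all $i \ge M$, Lemma \ref{lem: second decomp} gives $N$ with, for $n > N$,
\[
\nu_\theta^{*n} = \eta_{n,\varepsilon} + \sum_{i \ge M}\sum_{q' \in A_{i-1}^{<i-1}}\sum_{q'' \in G} \alpha_{i,q',q''}\, q' * \zeta'_i * q'', \qquad \lvert \eta_{n,\varepsilon}\rvert < \varepsilon.
\]
Convolving on the left by $\sigma_j$ turns each summand into a mixture, with weights $\sigma_j(g)$ for $g \in D_j$, of pieces $\tilde q * \zeta'_i * q''$ with $\tilde q = g q' \in \{g_1,\dots,g_{r(i)}\}\cdot A_{i-1}^{<i-1}$. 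Since left and right multiplication by a fixed element are total-variation isometries, for any $s \in S$ we have $\lvert s \cdot (\tilde q * \zeta'_i * q'') - \tilde q * \zeta'_i * q'' \rvert = \lvert s^{\tilde q} \zeta'_i - \zeta'_i \rvert$. By amenable-visibility each piece admits some $s(\tilde q) \in S$ with $s(\tilde q)^{\tilde q} \in H$, whence $s(\tilde q)^{\tilde q} \in E_i$ and this quantity is $< \delta$. Assigning each summand to the bucket $s(\tilde q)$ and applying pigeonhole over the $\lvert S\rvert$ buckets, some single $s^* \in S$ collects summands of total weight $b \ge (1-\varepsilon)/\lvert S\rvert$. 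Bounding the $\eta_{n,\varepsilon}$-contribution by $2\lvert\eta_{n,\varepsilon}\rvert$, the in-bucket pieces by $\delta$ and the rest by $2$, the mass of $\eta_{n,\varepsilon}$ cancels and
\[
\lvert s^* \sigma_j \nu_\theta^{*n} - \sigma_j \nu_\theta^{*n}\rvert \le 2 - b\,(2 - \delta) \le 2 - \frac{(1-\varepsilon)(2-\delta)}{\lvert S\rvert},
\]
which is $< 2\big(1 - \tfrac{1}{2\lvert S\rvert}\big) = 2 - \tfrac{1}{\lvert S\rvert}$ once $(1-\varepsilon)(2-\delta) > 1$. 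As $j$ and $\theta$ were arbitrary, $\nu_\theta \in \clNF_S$ for every $\theta \in (0,1)$.

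The main obstacle is the bookkeeping in the second paragraph: Lemma \ref{lem: second decomp} controls the decomposition only while the alphabets $A_i$ stay finite, yet the \emph{single} measure $\nu''$ must simultaneously reproduce an arbitrary $\nu'$ and be robust against an arbitrary finitely-supported $\sigma_j$ multiplied on the left, whose support is unknown at construction time. The first requirement forces the finitely-supported disintegration of $\nu'$, and the second forces the windows $E_i$ to grow along a fixed exhaustion $\{g_1,\dots,g_{r(i)}\}$ of $G$, so that for each fixed $\sigma_j$ all sufficiently deep levels are already prepared. The only quantitative input is then the pigeonhole over the $\lvert S\rvert$ conjugacy-witnesses, and the constant $2\big(1-\tfrac{1}{2\lvert S\rvert}\big)$ in the definition of $\clNF_S$ is exactly calibrated to what this averaging delivers.
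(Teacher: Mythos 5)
Your proposal is correct and follows essentially the same route as the paper: disintegrate $\nu'$ into finitely supported layers, build $\nu''$ from F{\o}lner averages on the witnessing amenable subgroup $H$ chosen inductively to be almost invariant under the relevant conjugates of $S$, apply the decomposition lemma, and pigeonhole over $S$. The only (harmless) difference is bookkeeping: the paper absorbs an enumeration of $G$ into the support of $\nu''$ so that $\supp\sigma\subseteq A_{M-1}$ eventually, whereas you instead enlarge the conjugation windows $E_i$ along a fixed exhaustion of $G$.
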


Combining this lemma with lemmata \ref{lem: NF is Gdelta} and \ref{lem: NF stabilizers}, we ge the following;
\begin{cor}\label{cor: main non-free action}
For any finite non-empty amenably-visible subset $S$ of a countable group $G$, for a generic probability measure $\nu$ (resp. generic symmetric probability measure) on the group, the stabilizer of almost every point of the Poisson boundary $\partial(G,\nu)$ has a nonempty intersection with $S$.
\end{cor}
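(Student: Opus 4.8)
The plan is to obtain the corollary from the three preceding lemmas by a Baire-category argument; the only substantive input is the density of $\clNF_S$, which is exactly what Lemma \ref{lem: main non-free construction} provides, so the combination itself is routine. First I would record that $\meas(G)$ equipped with the total-variation metric is a complete metric space, being the closed subset $\{f \in \ell^1(G) : f \geq 0,\ \sum_g f(g) = 1\}$ of $\ell^1(G)$, so the Baire category theorem is available; the same holds for the closed subspace of symmetric measures.

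Next I would prove that $\clNF_S$ is dense. Given an arbitrary probability measure $\nu'$, Lemma \ref{lem: main non-free construction} furnishes a measure $\nu''$ with $\theta\nu'' + (1-\theta)\nu' \in \clNF_S$ for every $\theta \in (0,1)$. Since $\lvert(\theta\nu'' + (1-\theta)\nu') - \nu'\rvert = \theta\lvert \nu'' - \nu'\rvert \leq 2\theta$, letting $\theta \to 0$ produces members of $\clNF_S$ arbitrarily close to $\nu'$ in total variation, so $\clNF_S$ is dense. Combined with Lemma \ref{lem: NF is Gdelta}, which says $\clNF_S$ is $G_\delta$, this makes $\clNF_S$ a dense $G_\delta$, hence residual (generic) by Baire.

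I would then intersect with the non-degeneracy hypothesis required by Lemma \ref{lem: NF stabilizers}. The set of measures of full support is $\bigcap_{g \in G}\{\nu : \nu(g) > 0\}$, a countable intersection of dense open sets, hence a dense $G_\delta$; every full-support measure is non-degenerate, so the non-degenerate measures are also residual. A countable intersection of residual sets is residual, so the non-degenerate measures lying in $\clNF_S$ form a generic set, and for any such $\nu$ Lemma \ref{lem: NF stabilizers} gives that the stabilizer of almost every point of $\partial(G,\nu)$ meets $S$, which is the claim. For the symmetric statement I would run the identical argument inside the complete subspace of symmetric measures, using the symmetric form of Lemma \ref{lem: NF is Gdelta} already recorded there, the density construction applied with symmetric data, and the fact that the full-support symmetric measures are likewise a dense $G_\delta$ in that subspace.

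The main (indeed only) obstacle is the density input, and that has been isolated into Lemma \ref{lem: main non-free construction}; once it is in hand everything reduces to the standard facts that a dense $G_\delta$ in a Baire space is residual and that finite intersections of residual sets remain residual, so I do not anticipate any difficulty in assembling the corollary.
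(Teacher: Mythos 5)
Your proposal is correct and takes essentially the same route as the paper, which obtains the corollary by simply combining Lemma \ref{lem: main non-free construction} (density via $\theta \to 0$), Lemma \ref{lem: NF is Gdelta} ($G_\delta$), and Lemma \ref{lem: NF stabilizers} through the Baire category theorem in $\meas(G)$ and in the subspace of symmetric measures. Your explicit treatment of the non-degeneracy hypothesis, by intersecting with the residual set of full-support measures, is a detail the paper leaves implicit but is exactly the intended argument.
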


We first give a construction and then returrn to the proof of the lemma.

Let us make the following easy observation without a proof:

\begin{lem}\label{lem: finitary decomposition}
Let $\nu'$ be a probability measure on a countable set and let $(\upsilon_i)_{i = 1}^{\infty}$ be a proibability vector ($\upsilon_i \geq 0$ and $\sum_i \upsilon_i = 1$) with infinitely many positive elements. There is a sequence of finitely supported probability measures $(\nu'_i$) such that 
$$
\nu' = \sum_i \upsilon_i \nu'_i. 
$$
\end{lem}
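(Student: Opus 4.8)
The plan is to construct the measures $\nu'_i$ one at a time by a greedy ``residual'' procedure, peeling off from $\nu'$ finitely supported chunks of the prescribed total masses. First I would dispose of the indices with $\upsilon_i = 0$: for each such $i$ set $\nu'_i$ to be an arbitrary fixed finitely supported probability measure (say a point mass), since these terms contribute nothing to $\sum_i \upsilon_i \nu'_i$. Relabel the positive weights as $q_1, q_2, \ldots$, so that $q_k > 0$, $\sum_k q_k = 1$, and there are infinitely many terms; it then suffices to produce finitely supported probability measures $\mu_k$ with $\sum_k q_k \mu_k = \nu'$.

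Next I would set up the recursion. Put $R_0 = \nu'$ and assume inductively that $R_{k-1}$ is a non-negative measure on the countable set with total mass $r_{k-1} = \sum_{j \ge k} q_j$. Because there are infinitely many positive weights, $r_{k-1} - q_k = \sum_{j > k} q_j > 0$, so $R_{k-1}$ carries strictly more mass than $q_k$. Enumerating the atoms of $R_{k-1}$ and summing their weights, the partial sums increase to $r_{k-1} > q_k$, hence some finite prefix of atoms already accumulates mass $\ge q_k$. I would let $q_k \mu_k$ be the sub-measure obtained by taking those atoms in full up to the last one, of which only a fractional piece is retained so that the total is exactly $q_k$. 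Then $\mu_k := q_k^{-1}(q_k \mu_k)$ is a finitely supported probability measure, and $R_k := R_{k-1} - q_k \mu_k \ge 0$ has mass $r_k = \sum_{j > k} q_j$, completing the inductive step. Each $\mu_k$ meets only finitely many atoms, namely at most the leftover of the atom ``crossed'' at the previous stage, some atoms taken in full, and one newly crossed atom.

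Finally I would verify convergence. By construction $\nu' = \sum_{j \le k} q_j \mu_j + R_k$ with $\lvert R_k \rvert = \sum_{j > k} q_j \to 0$ as $k \to \infty$. Evaluating on an arbitrary atom $y$, the term $R_k(\{y\}) \to 0$, so $\nu'(\{y\}) = \sum_j q_j \mu_j(\{y\})$; since both sides are purely atomic this gives $\nu' = \sum_j q_j \mu_j$ as measures, as required.

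The only point demanding care is maintaining a \emph{strictly} positive residual mass at every stage, which is exactly what guarantees that the greedy selection terminates after finitely many atoms while never exhausting $\nu'$ prematurely; this is precisely where the hypothesis of infinitely many positive weights is used and cannot be dropped, since if only finitely many $\upsilon_i$ were positive the support of $\nu'$ would be forced to be finite.
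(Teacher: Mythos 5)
Your argument is correct. Note that the paper states this lemma explicitly ``without a proof,'' treating it as an easy observation, so there is no official argument to compare against; your greedy peeling construction --- maintaining a residual of mass $\sum_{j>k} q_j$ and extracting a finitely supported sub-measure of mass exactly $q_k$ at each stage, which is possible precisely because the residual mass stays strictly above $q_k$ --- is a standard and complete way to fill in the omitted details. Your closing remark correctly pinpoints where the hypothesis of infinitely many positive weights enters: it guarantees the strict inequality $r_{k-1} > q_k$ that lets the greedy selection stop after finitely many atoms, and without it the conclusion fails for any $\nu'$ of infinite support.
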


Since $S$ is an amenably-visible subset, let us fix an amenable subgroup $H$ of $G$ such that $S^\gamma \cap H \neq \varnothing$ for all $\gamma \in G$.

We say that a finite subset $F$ of the group $G$ is $(R, \epsilon)$-invariant for a finite subset $R$ of $G$ and $\epsilon > 0$ if $\lvert RF \setminus F \rvert <  \epsilon \lvert F\rvert$.
We also introduce the following notation $S^A = \lbrace s^a = a^{-1} s a \vert s \in S, \, a\in A\rbrace$.

We fix an enumeration $(c_i)_{i=1}^{\infty}$ of all elements of $G$.
Let us iteratively construct sequences of finite subsets $(A_i)_{i=0}^{\infty}$ and $(F_i)_{i=1}^{\infty}$ of $G$. We set $A_0 = \varnothing$. Now for each $i \geq 1$ we define
\begin{enumerate}
\item Let $F_i$ be any finite symmetric $(S^{A_{i-1}^{i}} \cap H, 1/i)$-invariant subset of $H$;
\item Let $A_i = A_{i-1} \cup \lbrace c_i , c^{-1}_i\rbrace \cup \supp \nu'_i \cup (\supp \nu'_i)^{-1}$. 
\end{enumerate}

We define the probability measure 
\begin{equation*}
\nu'' = \sum_{i = 1}^{\infty} \frac{\upsilon_i}{3}(\delta_{c_i} + \delta_{c_i^{-1}} + \lambda_{F_i}),
\end{equation*}
where $\lambda_{F_i}$ is the uniform probability measure on the set $F_i$.
It is easy to observe that $\nu''$ is a symmetric measure of full support on $G$.

Let us carry on with the proof.

\begin{proof}[Proof of Lemma \ref{lem: main non-free construction}]
Consider $\nu = \theta \nu''  + (1 - \theta)\nu'$ for some $0 < \theta < 1$.
We will prove that for every finitely-supported probability measure $\sigma$ on $G$ we have 
$$
\lvert s \cdot \sigma \cdot \nu^{*n} - \sigma \cdot \nu^{i}\rvert < 2 \Big(1 - \frac{1}{2\lvert S \rvert}\Big),
$$
for all big enough $n$.

Take $\varepsilon > 0$. We apply the decomposition from Lemma \ref{lem: second decomp} to $\nu$, using
$\zeta'_i = \lambda_{F_i}$, $$\zeta''_i = \frac{1}{1 - \theta/3}\Big(\frac{\theta}{3}(\delta_{c_i} + \delta_{c_i^{-1}}) + (1 - \theta)\nu'_i \Big),$$
$\beta_i = \theta/3$, and $M$ such that $1/M < \varepsilon$ and $\supp \sigma \subset A_{M-1}$.
This gives us, that there is a certain $N$ such that for all $n > N$ we have

\begin{equation}\label{eq: second decomp}
\nu^{*n} = \eta + \sum_{i \geq M}\sum_{q' \in A^{<i-1}_{i-1}} \sum_{q'' \in G} \alpha_{i,q',q''} \cdot q' * \lambda_{F_i} * q'',
\end{equation}
where $\eta$ is a positive measure with $\lvert \eta\rvert < \varepsilon$, $\alpha_{i,q',q''} \geq 0$, and $\lvert \eta\rvert + \sum_{i,q',q''}\alpha_{i,q',q''} = 1$.

For $\sigma * \nu^{*n}$ we get a decomposition 

\begin{equation}\label{eq: convoluted decomp}
\sigma * \nu^{*n} = \eta' + \sum_{i \geq M}\sum_{q' \in A^{<i}_{i-1}} \sum_{q'' \in G} \alpha'_{i,q',q''} \cdot q' * \lambda_{F_i} * q'',
\end{equation}
where $\eta' = \sigma * \eta$ is a positive measure with $\lvert \eta'\rvert < \varepsilon$, $\alpha_{i,q',q''} \geq 0$, and $\lvert \eta'\rvert + \sum_{i,q}\alpha'_{i,q',q''} = 1$. Now we note that for each $q'$ there is an $s \in S$ such that $s^{q'} \in H$. Thus, we may choose an $s \in S$ such that 
$$
\sum_{i,q',q''}\alpha'_{i,q',q''} \geq \frac{1 - \varepsilon}{\lvert S\rvert},
$$
where the sum is taken over $i,q',q''$ such that $i \geq M$, $q' \in A^{<i}_{i-1}$, $s^{q'} \in H$, $q'' \in G$. Now, noting that 

$$ s * q' * \lambda_{F_i} * q'' = q' * s^{q'} * \lambda_{F_i} * q'',$$
and using the $(S^{A^i_{i-1}} \cap H, 1/M)$-invariance of $F_i$, we conclude that

$$
\lvert s \cdot \sigma \cdot \nu^{*n} - \sigma \cdot \nu^{i}\rvert < 2 \Big(1 - \frac{1}{\lvert S \rvert}\Big) + 4 \varepsilon,
$$ 
for all $n > N$.

\end{proof}

\section{Genericlly essentially free action on the Poisson boundaries of $C^*$-simple groups}\label{sec: essentially free}
Denote $\Span S$ the linear span of a set $S$. 

Let us denote $\fa_g$ the element of $C_r(G)$ corresponding to $g \in G$.
We note that there is a natural right action of the group $G$ on $C_r(G)$ by conjugation: $a^g = \fa_g^{-1} a \fa_g$, for $g \in G$ and $a \in C_r(G)$. This is an action by $C^*$-algebra automorphisms. For a state $\phi \in \bS(C_r(G))$ and a group element $g \in G$ we may define $(g \cdot \phi)(a) = \phi(a^g)$ for each $a \in C_r(G)$. For a finite measure $\nu$, $a \in C_r(G)$ and $\phi \in \bS(C_r(G))$ we may define $(\nu \cdot \phi) = \sum_{g \in G} \nu(\lbrace g\rbrace)(g \cdot \phi)$ and $a^{\nu} = \sum_{g \in G} a^g$. Note that $(\nu \cdot \varphi)(a) = \varphi(a^{\nu})$, If $\nu$ is a probability measure then the map $a \mapsto a^\mu$ is a linear operator of norm $1$: the norm is trivially bounded by $1$ and $\fa_e^\nu = \fa_e$.

Denote $\tau : C_r(G) \to \C$ the natural trace on $C_r(G)$ defined by $\tau(a) = \langle a v , a\rangle$ where $v$ is the unit vector from $l^2(G)$ corresponding to the group identity. We say that a $\varphi$ state is $\nu$ - stationary if $\varphi = \nu \cdot \varphi$. We say (following \cite{HK23})that a probability measure $\nu$ is $C^*$ - simple if $\tau$ is the only $\nu$-stationary state. 
Hartman and Kalantar gave in \cite{HK23} the followitng characterization for $C^*$-simplicity of a group:
\begin{theor}
A group is $C^*$-simple iff it has a $C^*$-simple probability measure. 
\end{theor}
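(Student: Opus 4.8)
My plan is to prove the two implications separately. Throughout I use that the canonical trace $\tau$ is itself $\nu$-stationary for every $\nu$: indeed $\tau(a^g)=\tau(\fa_g^{-1}a\fa_g)=\tau(a)$ by traciality, so $(\nu\cdot\tau)(a)=\tau(a^\nu)=\tau(a)$. Hence in each direction the real content is to compare the \emph{uniqueness} of the stationary state with $C^*$-simplicity. It is convenient to record that, writing $T_\nu(a)=a^\nu=\sum_g\nu(\{g\})a^g$, one has $a^{\nu^{*n}}=T_\nu^{\,n}(a)$ (because $(a^g)^h=a^{gh}$), so that $\varphi$ is $\nu$-stationary exactly when $\varphi\circ T_\nu=\varphi$, equivalently $\varphi\circ T_\nu^{\,n}=\varphi$ for all $n$.

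For the implication \emph{a $C^*$-simple measure exists $\Rightarrow$ $G$ is $C^*$-simple} I argue contrapositively. If $G$ is not $C^*$-simple, then by Kennedy's criterion (Theorem~\ref{thm: inner criterion}, in its uniformly-recurrent-subgroup form) there is a nontrivial amenable uniformly recurrent subgroup: a minimal closed conjugation-invariant set $\mathcal H\subset\operatorname{Sub}(G)$ all of whose members are nontrivial amenable subgroups. For an amenable $H\le G$ the quasi-regular representation on $\ell^2(G/H)$ is weakly contained in $\lambda_G$, so the positive-definite function $g\mapsto\mathbb{1}[g\in H]$ extends to a state $\psi_H\in\bS(C_r(G))$ with $\psi_H(\fa_g)=\mathbb{1}[g\in H]$; the assignment $H\mapsto\psi_H$ is weak-$*$ continuous, satisfies $g\cdot\psi_H=\psi_{gHg^{-1}}$ (the two states agree on the generators $\fa_k$, hence everywhere), and sends $\{e\}$ to $\tau$. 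Given \emph{any} $\nu$, the conjugation walk induces an affine homeomorphism of the weak-$*$-compact convex set of probability measures on $\mathcal H$, so by Markov--Kakutani there is a $\nu$-stationary $\omega$ supported on $\mathcal H$; its barycenter $\varphi_\omega=\int\psi_H\,d\omega(H)$ is then a $\nu$-stationary state with $\varphi_\omega(\fa_g)=\omega(\{H:g\in H\})$. Since $G$ is countable, $\{H\neq\{e\}\}=\bigcup_{g\neq e}\{H\ni g\}$ has full $\omega$-measure, so some $g\neq e$ satisfies $\omega(\{H\ni g\})>0$, giving $\varphi_\omega\neq\tau$. Thus no measure on a non-$C^*$-simple group can be $C^*$-simple.

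For the converse \emph{$C^*$-simple $\Rightarrow$ a $C^*$-simple measure exists} I would invoke Haagerup's Powers averaging property, which is equivalent to $C^*$-simplicity: for every $a\in C_r(G)$ one has $\tau(a)1\in\overline{\operatorname{conv}}\{a^g:g\in G\}$. Equivalently, for each $a$ and $\varepsilon>0$ there is a finitely supported probability measure $\mu$ with $\|T_\mu(a)-\tau(a)1\|<\varepsilon$. By the reformulation in the first paragraph it suffices to produce a \emph{single} measure $\nu$ with $T_\nu^{\,n}(a)\to\tau(a)1$ in norm for every $a$: then any $\nu$-stationary $\varphi$ obeys $\varphi(a)=\lim_n\varphi(T_\nu^{\,n}(a))=\tau(a)$, forcing $\varphi=\tau$. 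Fixing a dense sequence in the unit ball of the separable algebra $C_r(G)$, I would assemble the one-step averaging measures supplied by Powers averaging into $\nu$ by a Kaimanovich--Vershik-style iterative scheme — the same philosophy underlying the non-freeness construction earlier in this paper — arranging the blocks of $\nu$ at rapidly increasing depths so that at each scale the elements already treated remain contracted while the next element is driven close to its trace.

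The main obstacle is precisely this last assembly. Single-step Powers averaging is immediate, but making the \emph{iterated} operator $T_\nu^{\,n}$ converge to $\tau(\cdot)1$ uniformly on a dense set with one fixed $\nu$ requires controlling how averaging achieved at one stage survives convolution against the later part of $\nu$; this bookkeeping is the heart of the existence direction. The implication \emph{measure $\Rightarrow$ simple}, by contrast, is essentially formal once Kennedy's amenable-URS criterion and the weak-containment states $\psi_H$ are in hand.
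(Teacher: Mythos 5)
First, a point of reference: the paper does not prove this theorem at all --- it is quoted from Hartman--Kalantar \cite{HK23} --- so there is no internal proof to match. What the paper does supply is a self-contained proof of the existence direction in a stronger, generic form (Section \ref{sec: essentially free}): it introduces HK-measures ($\lVert a^{\nu^{*n}}\rVert\to 0$ for $a\in\ker\tau$), shows every HK-measure is $C^*$-simple, and constructs them on any $C^*$-simple group. Your first implication (a $C^*$-simple measure forces $C^*$-simplicity) is essentially correct: the states $\psi_H$ obtained from weak containment of $\ell^2(G/H)$ in the regular representation for amenable $H$, equivariance and weak-$*$ continuity of $H\mapsto\psi_H$, a stationary measure on the amenable URS, and a barycenter state $\neq\tau$ --- this is a sound and standard argument, though it invokes Kennedy's URS form of the criterion rather than the amenably-visible form stated as Theorem \ref{thm: inner criterion}, and it is the same barycenter idea the paper sketches when explaining why $C^*$-simple measures give essentially free boundary actions.

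The genuine gap is in the converse, and you have flagged it yourself. You correctly reduce to producing one measure $\nu$ with $T_\nu^{\,n}(a)\to\tau(a)1$ for all $a$, but the ``assembly'' of such a $\nu$ from single-step Powers averaging is exactly the nontrivial content, and your sketch of ``arranging blocks at rapidly increasing depths'' does not close: the difficulty you name --- that contraction achieved at one stage must survive convolution against the later part of $\nu$ --- is real, and the paper circumvents it rather than solves it head-on. Concretely, the paper (i) upgrades Powers averaging to a single finitely supported symmetric measure contracting a finite set of elements of $\ker\tau$ simultaneously (Lemma \ref{lem: powers strong}, by composing averaging measures, using that $a^\mu\in\ker\tau$); and (ii) replaces any analysis of the iterated operator by a stopping-time decomposition of $\nu^{*n}$ (Lemma \ref{lem: second decomp}) as a convex combination, up to $\varepsilon$ of mass, of measures $q'*\mu_i*q''$ with $q'$ ranging over a controlled finite set $A_{i-1}^{<i}$ fixed \emph{before} $\mu_i$ is chosen; since $\lVert d^{q'*\mu_i*q''}\rVert\leq\lVert (d^{q'})^{\mu_i}\rVert$, only a one-step estimate is ever needed, and choosing $\mu_i$ by (i) to contract $d_j^{q'}$ for all $j<i$ and all admissible $q'$ finishes the proof (Lemma \ref{lem: HK density}). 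Without some such device --- a uniform control of the left cofactors $q'$ and the observation that right cofactors are harmless --- your existence direction remains an unproven strategy rather than a proof.
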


As a part of this theorem, authors presented a construction of a $C^*$-simple measure on a $C^*$-simple group. We aim to show that such measures are generic in the spaces of probability measures and in the space of symmetric probability measures endowed with the total variation norm. 

Importance of $C^*$-simple measures for us stems from the following theorem that again can be found in \cite{HK23}:

\begin{theor}
Let $\nu$ be a non-degenerate $C^*$-simple measure on a countable group $G$. The natural action $G \curvearrowright \partial(G, \nu)$ is essentially free.
\end{theor}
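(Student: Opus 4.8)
The plan is to argue by contraposition: assuming the action $G\curvearrowright\partial(G,\nu)$ is \emph{not} essentially free, I would manufacture a $\nu$-stationary state on $C_r(G)$ different from $\tau$, contradicting the $C^*$-simplicity of $\nu$. Write $B=\partial(G,\nu)$ and let $m$ be its harmonic (stationary) measure, so $\nu * m = m$. For $g\in G$ put $\mathrm{Fix}(g)=\{b\in B:\ gb=b\}$ and define $\psi(g)=m(\mathrm{Fix}(g))$. Since $G$ is countable, the non-free locus $\{b:\stab(b)\neq\{e\}\}=\bigcup_{g\neq e}\mathrm{Fix}(g)$ has positive measure iff $m(\mathrm{Fix}(g))>0$ for some $g\neq e$; thus essential freeness is exactly the assertion $\psi=\mathbf 1_{\{e\}}$. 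My goal is to realize $\psi$ as the restriction to group elements of a $\nu$-stationary state and then invoke the hypothesis.

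First I would check that $\psi$ is $\nu$-stationary in the sense matching the conjugation dynamics on states, namely $\psi(x)=\sum_{g}\nu(g)\,\psi(g^{-1}xg)$, which is precisely the condition $\varphi=\nu\cdot\varphi$ read on the generators $\fa_x$. This is a direct measure computation: since $b\in\mathrm{Fix}(g^{-1}xg)$ iff $gb\in\mathrm{Fix}(x)$, one has $\mathrm{Fix}(g^{-1}xg)=g^{-1}\mathrm{Fix}(x)$, whence $\sum_g\nu(g)\,m(g^{-1}\mathrm{Fix}(x))=(\nu*m)(\mathrm{Fix}(x))=m(\mathrm{Fix}(x))=\psi(x)$, using $\nu*m=m$. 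No Radon--Nikodym factors intervene, because only the measures of fixed-point sets appear.

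The crux is to show that $\psi$ is the group-element restriction of an honest state on $C_r(G)$. Here I would use the structural input that for non-degenerate $\nu$ the stabilizer $\stab(b)$ is an amenable subgroup of $G$ for $m$-almost every $b$. For any amenable $H\le G$ the quasi-regular representation $\lambda_{G/H}$ is weakly contained in $\lambda_G$: amenability gives $1_H\prec\lambda_H$, and induction preserves weak containment, so $\lambda_{G/H}=\mathrm{Ind}_H^G 1_H\prec\mathrm{Ind}_H^G\lambda_H=\lambda_G$; consequently the diagonal coefficient $g\mapsto\langle\lambda_{G/H}(g)\delta_{eH},\delta_{eH}\rangle=\mathbf 1[g\in H]$ extends to a state $\tau_H$ on $C_r(G)$ with $\tau_H(\fa_g)=\mathbf 1[g\in H]$. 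Since $\psi(g)=\int_B\mathbf 1[g\in\stab(b)]\,dm(b)=\int_B\tau_{\stab(b)}(\fa_g)\,dm(b)$, I would set $\varphi=\int_B\tau_{\stab(b)}\,dm(b)$; this is a well-defined state on $C_r(G)$ (an integral of states against a probability measure, using Borel measurability of $b\mapsto\stab(b)$ in the Effros Borel structure, hence of $b\mapsto\tau_{\stab(b)}(a)$ for each $a$), and by construction $\varphi(\fa_g)=\psi(g)$. Equivalently, $\varphi$ is the vector state of the direct integral $\int_B^\oplus\lambda_{G/\stab(b)}\,dm(b)$, which is weakly contained in $\lambda_G$ since weak containment in $\lambda_G$ is preserved under direct integrals.

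Finally, the stationarity identity for $\psi$ on the generators $\fa_g$ propagates by linearity and norm-continuity to $\varphi=\nu\cdot\varphi$ on all of $C_r(G)$, so $\varphi$ is a $\nu$-stationary state. As $\nu$ is $C^*$-simple, $\varphi=\tau$, and comparing on group elements gives $\psi(g)=\tau(\fa_g)=\mathbf 1[g=e]$; thus $m(\mathrm{Fix}(g))=0$ for every $g\neq e$ and the action is essentially free. I expect the genuinely non-trivial ingredients to be (i) the almost-everywhere amenability of boundary stabilizers for non-degenerate $\nu$, and (ii) the measurable-field and weak-containment bookkeeping needed to assemble the per-point states $\tau_{\stab(b)}$ into a single state on $C_r(G)$; the stationarity computation and the reduction via countability of $G$ are routine.
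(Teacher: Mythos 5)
Your argument is correct and follows exactly the route the paper sketches for this theorem of Hartman--Kalantar: almost-sure amenability of boundary stabilizers, the states $\tau_H$ on $C_r(G)$ with $\tau_H(\fa_g)=\mathbf 1[g\in H]$ obtained from weak containment of the quasi-regular representation, and integration over the boundary to produce a $\nu$-stationary state that must equal $\tau$. The details you supply (the identity $\mathrm{Fix}(g^{-1}xg)=g^{-1}\mathrm{Fix}(x)$ giving stationarity via $\nu*m=m$, and the measurability/weak-containment bookkeeping) are the right ones and fill in what the paper leaves implicit.
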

The main idea there is that the stabilizer of almost every point is an amenable subgroup, and for each amenable subgroup there is a natural state on $C_r(G)$ that has the value $1$ on the group algebra element corresponding to the sungroup elements, and the value $0$ for algebra elements corresponding to other group elements . Integrating these states, we get a $\nu$-stationary state that is not equal to the natural trace.

Let us note that $\ker \tau = \overline{\Span \lbrace \fa_g \vert g \in G \setminus \lbrace e\rbrace \rbrace}$. 
Note that \[
{C_r(G) = \ker \tau \oplus \Span \lbrace \fa_e\rbrace},
\]
and that these subspaces are invariant for the operator $a \mapsto a^{\nu}$.
We say that a probability measure $\nu$ on the group $G$ is a {\em Hartman-Kalantar  measure, or HK-measure} for $G$ if $\lim_{n \to \infty}\lVert a^{\nu^{*n}}\rVert = 0$ for all $a \in \ker \tau$.
We get the following:

\begin{lem}
Any HK probability measure is $C^*$-simple.
\end{lem}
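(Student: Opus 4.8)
The plan is to show directly that every $\nu$-stationary state coincides with the canonical trace $\tau$, and that $\tau$ is itself $\nu$-stationary; together these say that $\tau$ is the unique $\nu$-stationary state, which is precisely the definition of $C^*$-simplicity of $\nu$.

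First I would record the semigroup identity $(a^\nu)^\nu = a^{\nu * \nu}$. This follows from $\fa_h^{-1}\fa_g^{-1}a\fa_g\fa_h = \fa_{gh}^{-1}a\fa_{gh}$: expanding $(a^\nu)^\nu = \sum_{g,h}\nu(\lbrace g\rbrace)\nu(\lbrace h\rbrace)\fa_{gh}^{-1}a\fa_{gh}$ and collecting terms by the value $k = gh$ gives $\sum_k (\nu*\nu)(\lbrace k\rbrace)a^k = a^{\nu*\nu}$. Iterating, the $n$-fold application of the operator $a \mapsto a^\nu$ equals $a \mapsto a^{\nu^{*n}}$. Consequently, if $\varphi = \nu\cdot\varphi$, then $\varphi(a) = \varphi(a^\nu) = \varphi((a^\nu)^\nu) = \cdots$, so by induction $\varphi(a) = \varphi(a^{\nu^{*n}})$ for every $a \in C_r(G)$ and every $n$.

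Next I would exploit the HK hypothesis. For $a \in \ker\tau$, since $\varphi$ is a state and hence has norm one, we have $\lvert \varphi(a)\rvert = \lvert\varphi(a^{\nu^{*n}})\rvert \le \lVert a^{\nu^{*n}}\rVert$, and the right-hand side tends to $0$ as $n \to \infty$ by the definition of an HK measure. Hence $\varphi$ vanishes on $\ker\tau$. Using the decomposition $C_r(G) = \ker\tau \oplus \Span\lbrace \fa_e\rbrace$, I write any $b$ as $b = (b - \tau(b)\fa_e) + \tau(b)\fa_e$, with the first summand in $\ker\tau$; applying $\varphi$ and using $\varphi(\fa_e) = 1$ (as $\fa_e$ is the unit of $C_r(G)$) yields $\varphi(b) = \tau(b)$. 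Thus $\varphi = \tau$.

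Finally, to confirm that $\tau$ really is a $\nu$-stationary state, I would invoke its conjugation invariance $\tau(a^g) = \tau(\fa_g^{-1}a\fa_g) = \tau(a)$, whence $\tau(a^\nu) = \sum_g \nu(\lbrace g\rbrace)\tau(a^g) = \tau(a)$, i.e. $\tau = \nu\cdot\tau$. I do not anticipate a serious obstacle: the argument is a clean functional-analytic one. The only two points needing a line of care are the semigroup identity, which is what legitimizes passing from one-step to $n$-step stationarity, and the elementary but essential observation that a state has norm one, so that $\lvert\varphi(a^{\nu^{*n}})\rvert$ is dominated by $\lVert a^{\nu^{*n}}\rVert$ and the HK decay can be applied.
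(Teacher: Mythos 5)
Your proof is correct and follows essentially the same route as the paper: iterate stationarity to get $\varphi(a)=\varphi(a^{\nu^{*n}})$, kill the $\ker\tau$ component using the HK decay and the fact that a state has norm one, and read off $\varphi=\tau$ from the decomposition $C_r(G)=\ker\tau\oplus\Span\lbrace\fa_e\rbrace$. The only differences are that you spell out the semigroup identity $(a^\nu)^\nu=a^{\nu*\nu}$ and verify that $\tau$ is itself stationary, both of which the paper leaves implicit.
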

\begin{proof}
Let $\nu$ be an HK probability measure and let $a \in C_r(G)$. Consider decomposition $a = x + y$, where $x \in \ker \tau$ and $y \in \Span \lbrace \fa_e \rbrace$. 
Let $\varphi$ be a $\nu$-stationary state. Take any $a \in C_r(G)$ and compute:
\begin{multline*}
\varphi(a) = \lim_{n \to \infty} (\nu^{*n} \cdot \varphi)(a) = \lim_{n \to \infty} \varphi(a ^ {\mu^{*n}}) =\\  \lim_{n \to \infty} \varphi(x ^ {\mu^{*n}}) + \lim_{n \to \infty} \varphi(y ^ {\mu^{*n}}) = \varphi(y) = \tau(y) = \tau(a).
\end{multline*}
\end{proof}


We will show that the set of HK probability measures is residual in the space of all probabiltiy measures on a $C^*$-simple group endowed with the topology of total variation. Let us first establish that HK probability measures form  a $G_\delta$ set. For this we will need the following lemma.

\begin{lem}
Let $G$ be a countable group, $\nu$ be a probability measure on $G$ and $(d_i)$ be a dense sequence in the unit ball of $\ker \tau$.
The following are equivalent:
\begin{enumerate}
\item $\nu$ is an HK measure;
\item for any $a$ in the unit ball of $\ker \tau$ there is a natural $n$ such that $\lVert a^{\nu^{*n}} \rVert < 1/2$.
\item for any natural $i$ there is $n$ such that $\lVert d_i^{\nu^{*n}} \rVert < 1/2$
\end{enumerate}
\end{lem}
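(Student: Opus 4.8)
The plan is to reduce all three conditions to statements about the single scalar quantity
\[
L(a) := \lim_{n \to \infty} \lVert a^{\nu^{*n}}\rVert, \qquad a \in \ker\tau,
\]
and then to run the cycle (1)$\Rightarrow$(2)$\Rightarrow$(3)$\Rightarrow$(1). First I would record the structural facts already available in the text: the map $a \mapsto a^\nu$ is a linear contraction of $C_r(G)$ (norm $1$) leaving $\ker\tau$ invariant, and a one-line computation from $(a^g)^h = a^{gh}$ gives the semigroup law $(a^\nu)^\mu = a^{\nu*\mu}$, so that $a \mapsto a^{\nu^{*n}}$ is the $n$-th iterate of $a \mapsto a^\nu$. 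Being iterates of a contraction, the norms $\lVert a^{\nu^{*n}}\rVert$ are non-increasing in $n$; hence $L(a)$ exists, $L(a) \le \lVert a\rVert$, and, crucially, the clause ``there is $n$ with $\lVert a^{\nu^{*n}}\rVert < 1/2$'' is \emph{equivalent} to $L(a) < 1/2$. Thus (1) reads $L \equiv 0$ on $\ker\tau$, (2) reads $L(a) < 1/2$ for every $a$ in the unit ball, and (3) reads $L(d_i) < 1/2$ for every $i$.

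Next I would establish the properties of $L$ that drive the argument. Subadditivity and homogeneity (i.e. $L$ is a seminorm) follow by passing to the limit in the corresponding inequalities for $\lVert\cdot\rVert$; in particular $\lvert L(a) - L(b)\rvert \le L(a-b) \le \lVert a - b\rVert$, so $L$ is $1$-Lipschitz, hence continuous. The decisive property is the invariance $L(a^\nu) = L(a)$, which is immediate from $\lVert (a^\nu)^{\nu^{*n}}\rVert = \lVert a^{\nu^{*(n+1)}}\rVert$; combined with monotonicity this also records that $\lVert a^{\nu^{*n}}\rVert \downarrow L(a)$.

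With these in hand the implications (1)$\Rightarrow$(2) and (2)$\Rightarrow$(3) are immediate: if $L \equiv 0$ then every sequence $\lVert a^{\nu^{*n}}\rVert \to 0$ eventually drops below $1/2$, and (2) applied to the $d_i$ (which lie in the unit ball) is exactly (3). The substance is (3)$\Rightarrow$(1). From (3) we have $L(d_i) < 1/2$ for all $i$; by continuity of $L$ and density of $(d_i)$ in the unit ball this yields $L(a) \le 1/2$ for every $a$ with $\lVert a\rVert \le 1$, and homogeneity upgrades this to $L(a) \le \tfrac12\lVert a\rVert$ for all $a \in \ker\tau$. Finally I would close the gap by a self-improvement step exploiting invariance: fixing $a$ and $\varepsilon > 0$, choose $n$ with $\lVert a^{\nu^{*n}}\rVert < L(a) + \varepsilon$ (possible since the norms decrease to $L(a)$), and estimate
\[
L(a) = L\big(a^{\nu^{*n}}\big) \le \tfrac12\lVert a^{\nu^{*n}}\rVert < \tfrac12\big(L(a) + \varepsilon\big).
\]
Letting $\varepsilon \to 0$ gives $L(a) \le \tfrac12 L(a)$, whence $L(a) = 0$, which is (1).

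The main obstacle, and the only genuinely non-formal point, is precisely this last step: the naive estimate $L(a) \le L(d_i) + \lVert a - d_i\rVert$ only yields $L \le 1/2$ on the unit ball and cannot reach $L \equiv 0$ by approximation alone, since $L(d_i)$ may be arbitrarily close to $1/2$. What rescues the argument is that $L$ is \emph{invariant} under $a \mapsto a^\nu$ while the norm strictly contracts along the orbit down to $L(a)$; feeding the bound $L \le \tfrac12\lVert\cdot\rVert$ back along the orbit forces the contraction factor to compound, and the fixed threshold $1/2$ turns out to be inessential, as any constant strictly below $1$ would serve equally. (Alternatively, (2)$\Rightarrow$(1) can be obtained by the same mechanism in bootstrapping form: iterate the inequality $\lVert a^{\nu^{*n_1}}\rVert < \tfrac12$ along successively renormalized vectors in $\ker\tau$ to produce a subsequence of norms below $2^{-k}$, and fill in by monotonicity.)
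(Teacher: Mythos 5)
Your proof is correct and rests on the same mechanism as the paper's: monotonicity of $n \mapsto \lVert a^{\nu^{*n}}\rVert$ coming from the contraction property and the semigroup law $(a^{\nu})^{\mu}=a^{\nu*\mu}$, approximation by the $d_i$ to get a contraction factor strictly below $1$ on all of $\ker\tau$ after rescaling, and iteration along the convolution powers to force the norm to zero. Packaging that iteration as the invariant seminorm $L$ with the one-shot self-improvement $L \le \tfrac12 L$ is a clean reformulation of the paper's explicit bootstrap (which produces $\lVert a^{\nu^{*r}}\rVert \le 2^{-m}\lVert a\rVert$ directly, passing through condition (2) with the intermediate constant $2/3$), not a genuinely different route.
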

\begin{proof}
Implications $(1) \Rightarrow (2) \Rightarrow (3)$ are trivial. 

Let us prove that  $(2) \Rightarrow (1)$ we note that $(2)$ implies that for any $a$ from $\ker \tau$ there is a natural $n_1$ such that $\lVert a^{\nu^{*n_1}} \rVert \leq 1/2 \lVert a\rVert$. Now, applying this same observation to $a^{\nu^{*n_1}}$ we get $n_2$ such that $\lVert a^{\nu^{*(n_1 + n_2)}} \rVert \leq 1/4 \lVert a \rVert$. Repeating this, we get that for every $m$ there is $r$ such that  $\lVert a^{\nu^{*r}} \rVert \leq 1/2^m \lVert a\rVert$.

Let us prove that $(3) \Rightarrow (2)$. Recall that the norm of the operator $a \mapsto a^{\nu}$ is $1$. For any $a$ in the unit ball of $\ker \tau$ there is $d_i$ at the distance smaller that $1/6$. Applying $(3)$ to this $d_i$ we get that there is $n$ such that $\lVert a^{\nu^{*n}}\rVert < 2/3$. This means that for any $a \in \ker \tau$ there is $n$ such that $\lVert a^{\nu^{*n}}\rVert \leq 2/3 \lVert a\rVert$. Using an argument as in the previous paragraph, we conclude that for any $a$ from the unit ball of $\ker \tau$ there is $n$ such that $\lVert a^{\nu^{*n}}\rVert < 1/2$.
\end{proof}

\begin{cor}
The set of all HK measures is 
\[
\bigcap_{i \in \N}\bigcup_{n \in \N} \lbrace \nu \in \meas(G) \text{ such that } \lVert d_i^{\nu^{*n}}\rVert < 1/2\rbrace.
\]
\end{cor}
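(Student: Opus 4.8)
The plan is to recognize that the displayed set is merely a set-builder transcription of condition $(3)$ of the preceding lemma, so that the corollary follows at once from the equivalence proved there. First I would unwind the nested intersection and union into quantifiers: a probability measure $\nu$ lies in the inner union $\bigcup_{n \in \N}\lbrace \nu : \lVert d_i^{\nu^{*n}}\rVert < 1/2\rbrace$ exactly when there exists some $n \in \N$ with $\lVert d_i^{\nu^{*n}}\rVert < 1/2$, and $\nu$ lies in the outer intersection over $i$ exactly when this holds for every $i \in \N$. Hence the displayed set equals
\[
\lbrace \nu \in \meas(G) : \text{for every } i \in \N \text{ there is } n \in \N \text{ with } \lVert d_i^{\nu^{*n}}\rVert < 1/2\rbrace,
\]
which is verbatim the collection of measures satisfying condition $(3)$.

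Then I would simply invoke the equivalence $(1) \Leftrightarrow (3)$ of the lemma: a probability measure satisfies condition $(3)$ if and only if it is an HK measure. Consequently the displayed set coincides with the set of all HK measures, which is the asserted equality.

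There is no genuine obstacle here, since the entire mathematical content sits in the preceding lemma; the only point requiring care is reading off the quantifier structure of an intersection of unions correctly. I would also remark that, because each basic set $\lbrace \nu : \lVert d_i^{\nu^{*n}}\rVert < 1/2\rbrace$ is open for the total-variation topology (the assignment $\nu \mapsto d_i^{\nu^{*n}}$ and the norm being continuous), this representation simultaneously exhibits the HK measures as a $G_\delta$ subset of $\meas(G)$, which is the use the corollary is intended to serve.
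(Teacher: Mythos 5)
Your proof is correct and is exactly the intended argument: the displayed set is the set-builder form of condition $(3)$ of the preceding lemma, and the equivalence $(1) \Leftrightarrow (3)$ gives the equality with the set of HK measures. The paper treats this as an immediate consequence of that lemma (your closing remark about openness is the content of the \emph{next} corollary in the paper, but it does no harm here).
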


\begin{cor}
The set of all HK measures on a countable group is an intersection of contably-many open sets.
\end{cor}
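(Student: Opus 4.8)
The plan is to invoke the preceding corollary, which already exhibits the set of HK measures as
\[
\bigcap_{i \in \N}\bigcup_{n \in \N} U^i_n, \qquad U^i_n = \lbrace \nu \in \meas(G) : \lVert d_i^{\nu^{*n}}\rVert < 1/2 \rbrace,
\]
and to reduce everything to proving that each $U^i_n$ is open in the total variation topology. Once this is established, each union $\bigcup_{n \in \N} U^i_n$ is open (a union of open sets), and the set of HK measures, being the countable intersection $\bigcap_{i \in \N}\bigcup_{n \in \N} U^i_n$, is an intersection of countably many open sets, which is exactly the claim.

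To prove that $U^i_n$ is open, I would show that the scalar-valued map $\nu \mapsto \lVert d_i^{\nu^{*n}}\rVert$ is continuous from $(\meas(G), \text{TV})$ to $\R$; then $U^i_n$ is the preimage of the open ray $[0, 1/2)$ and is therefore open. I would factor this map through two elementary continuity statements. First, the $n$-fold convolution map $\nu \mapsto \nu^{*n}$ is Lipschitz in total variation: using the telescoping identity
\[
\mu^{*n} - \nu^{*n} = \sum_{k=0}^{n-1} \mu^{*k} * (\mu - \nu) * \nu^{*(n-1-k)}
\]
together with the fact that convolution with a probability measure is a contraction in total variation (it is a Markov operator), one obtains $\lvert \mu^{*n} - \nu^{*n}\rvert \leq n \, \lvert \mu - \nu\rvert$. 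Second, for a fixed $a \in C_r(G)$ the map $\mu \mapsto a^{\mu} = \sum_{g \in G} \mu(\lbrace g\rbrace)\, a^{g}$ is Lipschitz from total variation to the operator norm: since conjugation by $\fa_g$ is an isometric automorphism, $\lVert a^{g}\rVert = \lVert a\rVert$ for every $g$, whence $\lVert a^{\mu} - a^{\mu'}\rVert \leq \sum_{g \in G} \lvert \mu(\lbrace g\rbrace) - \mu'(\lbrace g\rbrace)\rvert \, \lVert a^{g}\rVert = \lvert \mu - \mu'\rvert \, \lVert a\rVert$. Applying the second statement with $a = d_i$ (so that $\lVert d_i\rVert \leq 1$) to the measure $\nu^{*n}$, and composing with the first statement, shows that $\nu \mapsto d_i^{\nu^{*n}}$ is continuous into $(C_r(G), \lVert \cdot \rVert)$; post-composing with the continuous norm function finishes the reduction.

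There is no genuine obstacle here; the argument is purely a matter of assembling standard continuity facts. The only point requiring a moment of care is the total variation continuity of the iterated convolution, for which the telescoping estimate above is the cleanest route. Both the isometry $\lVert a^{g}\rVert = \lVert a\rVert$ and the bound $\lVert a^{\nu}\rVert \leq \lVert a\rVert$ for probability measures $\nu$ have already been recorded in the text, so they may be cited directly rather than reproved.
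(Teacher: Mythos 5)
Your proof is correct and follows the same route as the paper: the paper's own argument simply cites the preceding corollary and observes that $\nu \mapsto a^{\nu^{*n}}$ is continuous from $\meas(G)$ to $C_r(G)$, which is exactly the continuity fact you establish (with more detail, via the telescoping estimate and the Lipschitz bound $\lVert a^{\mu} - a^{\mu'}\rVert \leq \lvert \mu - \mu'\rvert\,\lVert a\rVert$).
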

\begin{proof}
In view of the previous corollary, it is enough to observe that for any $a \in C_r(G)$ and an integer $n$, the map $\nu \mapsto a^{\nu^{*n}}$ is continuous from $\meas(G)$ to $C_r(G)$.
\end{proof}

Next we would like to prove that the set of HK measures is dense. For this we will prove later in this section the following:

\begin{lem}\label{lem: HK density}
Let $G$ be a $C^*$-simple group. Let $\mu_0$ be a probability measure of finite support on $G$ such that $\lvert \nu'\rvert < 1$. 
For all $0 < \delta < 1$ there is a symmetric probagbility measure $\nu'$ on $G$ such that  the measure $(1 - \delta)\mu_0 + \delta\nu'$ is an HK probability measure.
\end{lem}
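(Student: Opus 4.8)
The plan is to prove Lemma \ref{lem: HK density} by constructing $\nu'$ so that convolution powers of $\nu = (1-\delta)\mu_0 + \delta\nu'$ drive the norm of every $a \in \ker\tau$ to zero. The key structural fact I would exploit is that $C^*$-simplicity of $G$ is equivalent, by Kalantar--Kennedy and its reformulation in Kennedy's boundary criterion, to the action on the Furstenberg boundary being (topologically) free, and more usefully here to the existence of elements whose conjugation-averaging operator contracts the off-diagonal part of $C_r(G)$. Concretely, I would first reduce the problem via the already-proved equivalence lemma: it suffices to show that for each element $d_i$ of a fixed dense sequence in the unit ball of $\ker\tau$, some convolution power satisfies $\lVert d_i^{\nu^{*n}}\rVert < 1/2$. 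Since the $d_i$ are fixed in advance and each is approximated by a finitely supported element of $\ker\tau$, I can work one $d_i$ at a time and then combine.

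The heart of the construction is to choose, for each finitely supported $a \in \ker\tau$, a finite probability measure that shrinks $\lVert a^{(\cdot)}\rVert$. Because $G$ is $C^*$-simple, Hartman--Kalantar guarantee that a $C^*$-simple (hence HK) measure exists at all; the content of this lemma is that one can \emph{perturb} a prescribed finitely supported $\mu_0$ (with $\lvert\mu_0\rvert<1$, i.e. leaving mass to spare) into an HK measure while keeping it symmetric and staying close to $\mu_0$ in total variation. My plan is therefore to enumerate the countably many finitely supported generators $a_1, a_2, \dots$ spanning a dense subset of $\ker\tau$, and to build $\nu'$ as a weighted sum $\nu' = \sum_k \epsilon_k \rho_k$ of finitely supported symmetric measures $\rho_k$, where each $\rho_k$ is engineered so that averaging conjugation by its support drives the norm of $a_k^{(\cdot)}$ below any target. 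The symmetry is arranged by symmetrizing each $\rho_k$. The weights $\epsilon_k$ are taken summing to $1$ so that $\nu'$ is a probability measure, and the total variation distance from $\mu_0$ is controlled by $\delta$.

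To obtain each contracting $\rho_k$, I would use the averaging operator $a \mapsto a^{\rho}$: for a finitely supported $a \in \ker\tau$, standard estimates in $C_r(G)$ (using that the canonical trace annihilates $\ker\tau$ and that convolution with a probability measure is a norm-one Markov-type operator on $C_r(G)$ preserving the splitting $\ker\tau \oplus \Span\{\fa_e\}$) show that averaging conjugation over a sufficiently large and sufficiently invariant finite set, chosen using $C^*$-simplicity, reduces the operator norm on the finite-dimensional piece supporting $a$. The decisive input is $C^*$-simplicity itself, which rules out the obstruction that some $a \in \ker\tau$ could have $\lVert a^{\nu^{*n}}\rVert$ bounded away from $0$; this is exactly where the hypothesis is used and cannot be removed.

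The main obstacle I anticipate is the noncommutative nature of the contraction estimate: unlike the earlier Poisson-boundary arguments, which live in $\ell^1(G)$ and use only total variation and Markov operators, here the relevant quantity $\lVert a^{\nu^{*n}}\rVert$ is an operator norm in $C_r(G)$, and controlling it requires more than $\ell^1$-convexity. I expect the proof to lean on the Hartman--Kalantar machinery already cited, essentially importing their contraction mechanism for a \emph{single} measure and then showing it survives the convex perturbation $(1-\delta)\mu_0 + \delta\nu'$ and the diagonal combination over the dense sequence $(d_i)$. The bookkeeping to keep $\nu'$ symmetric, of full support if needed, and within total variation $\delta$ of $\mu_0$ while still contracting \emph{all} of $\ker\tau$ simultaneously is routine but must be done carefully so that the $G_\delta$ characterization from the preceding corollary applies and yields genericity.
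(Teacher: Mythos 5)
Your overall skeleton matches the paper's: reduce via the equivalence lemma to showing $\lVert d_i^{\nu^{*n}}\rVert<1/2$ for a dense sequence $(d_i)$ in the unit ball of $\ker\tau$, and build $\nu'$ as a convex combination $\sum_k \epsilon_k \rho_k$ of finitely supported symmetric measures, each engineered to contract a prescribed element of $\ker\tau$. But there is a genuine gap at the heart of the argument, and it is precisely the part you dismiss as ``routine bookkeeping.'' Knowing that each $\rho_k$ satisfies $\lVert a_k^{\rho_k}\rVert<\varepsilon$ tells you essentially nothing about $\lVert a_k^{\nu^{*n}}\rVert$ for $\nu=(1-\delta)\mu_0+\delta\sum_k\epsilon_k\rho_k$: expanding $\nu^{*n}$ gives a mixture of products $\mu_{i_1}*\cdots*\mu_{i_n}$, and even when some factor equals $\rho_k$, what it must contract is not $a_k$ but the conjugate $a_k^{q'}$ by the random prefix $q'=x_1\cdots x_{t-1}$, which is a different element of $\ker\tau$ that $\rho_k$ was never designed to handle. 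The paper resolves this with two ingredients you do not supply: (i) a strengthening of the Powers property (Lemma \ref{lem: powers strong}) producing a single finitely supported symmetric measure that simultaneously contracts a whole \emph{finite set} of elements of $\ker\tau$ --- applied to all conjugates $d_j^{g}$ with $g$ ranging over the finite set $A_{i-1}^{<i}$ of possible prefixes known at stage $i$ of an iterative construction --- and (ii) the stopping-time decomposition lemma, which shows that for the heavy-tailed weights $\upsilon_i\sim i^{-\alpha}$ all but $\varepsilon$ of the mass of $\nu^{*n}$ can be written as $q'*\mu_i*q''$ with $q'$ confined to exactly that finite set $A_{i-1}^{<i}$ (the right factor $q''$ is harmless since conjugation by a group element is isometric). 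Without an argument of this kind, the claim that the contraction ``survives the convex perturbation'' is unsupported, and it is not true for an arbitrary choice of weights.

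A secondary but real problem is the contraction mechanism you describe: averaging conjugation ``over a sufficiently large and sufficiently invariant finite set.'' Invariance (a F{\o}lner-type condition) is the mechanism used in the \emph{non}-$C^*$-simple half of the paper, inside an amenable subgroup; it is the wrong intuition here. The correct input is the Powers averaging property --- equivalent to $C^*$-simplicity by Haagerup and Kennedy --- which produces, for each $a\in\ker\tau$ and $\varepsilon>0$, a finitely supported probability measure $\rho$ with $\lVert a^{\rho}\rVert<\varepsilon$ via a ping-pong/paradoxicality argument, not via almost-invariance. You gesture at Hartman--Kalantar for a single measure, but the lemma's content is exactly the uniform, prefix-robust version of this contraction, and that is the step your proposal leaves unproved.
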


\begin{rem}
Using the approach similar to that used in Section \ref{sec: stab nontriv}, we can get that for every probability measure $\mu_0$ on a countable $C^*$-simple group $G$ there is a symmetric measure $\nu'$ of full support such that for every $0 < \theta \leq 1$, the convex combination $\theta \nu' + (1 - \theta)\mu_0$ is an HK-measure.  
\end{rem}

\begin{cor}\label{cor: cstar main cor}
For a countable $C^*$-simple group, the set of all HK-probability  measures is a dense with respect to the total variation topology $G_{\delta}$ set in both the set of probability measures and the set of symmetric probability measures. Thus $C^*$-simple measures are generic, and the action on the Poisson boundary is essentially free for a generic probability (resp. generic symmetric probability) measure on a countable $C^*$-simple group. 
\end{cor}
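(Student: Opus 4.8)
The plan is to assemble the corollary from the pieces already in place, the genuine input being the density furnished by Lemma \ref{lem: HK density}, and then to run the Baire category theorem. First I would record that $\meas(G)$, being the set of nonnegative elements of $\ell^1(G)$ of total mass $1$, is a closed convex subset of $\ell^1(G)$ and hence a complete metric space under total variation; the symmetric measures cut out a closed linear slice and are therefore complete as well. Thus both ambient spaces are Baire spaces, and in each of them a dense $G_\delta$ set is residual.

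Next I would check that the HK measures form a dense $G_\delta$. The $G_\delta$ property is exactly the corollary stating that the HK measures are a countable intersection of open sets, and its trace on the closed subspace of symmetric measures is $G_\delta$ there. For density, recall that finitely supported probability measures are total-variation dense in $\meas(G)$; given such a $\mu_0$ and $\varepsilon > 0$, Lemma \ref{lem: HK density} produces a symmetric $\nu'$ for which $(1 - \delta)\mu_0 + \delta \nu'$ is HK, and choosing $\delta < \varepsilon/2$ places this convex combination within $\varepsilon$ of $\mu_0$. Since the perturbing measure $\nu'$ is symmetric, the same argument applied to symmetric $\mu_0$ keeps the approximant symmetric, giving density inside the symmetric subspace too.

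I would then treat non-degeneracy separately, as it is required to invoke the essential-freeness theorem but is not entailed by the HK condition. The full-support measures form a dense $G_\delta$: for each $g \in G$ the set $\{\nu : \nu(\{g\}) > 0\}$ is open (the evaluation $\nu \mapsto \nu(\{g\})$ is $1$-Lipschitz in total variation) and dense (perturb by a small multiple of $\delta_g$, or of $\tfrac{1}{2}(\delta_g + \delta_{g^{-1}})$ in the symmetric setting), and full support is their countable intersection. A full-support measure is non-degenerate. Intersecting the residual HK set with the residual full-support set yields a residual set of measures that are simultaneously HK and non-degenerate, in both ambient spaces.

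Finally I would conclude. By the lemma that every HK measure is $C^*$-simple, the generic measure is $C^*$-simple, so $C^*$-simple measures are generic; and by the Hartman--Kalantar theorem that a non-degenerate $C^*$-simple measure has essentially free action on its Poisson boundary, every measure in the residual intersection above yields an essentially free action $G \curvearrowright \partial(G, \nu)$. The only delicate point in this packaging is that HK-ness alone does not force non-degeneracy, so the essential-freeness theorem cannot be applied to the HK set directly; the fix is the separate residuality of full-support measures together with the stability of residual sets under finite intersection, and I expect this to be the main thing to get right, the completeness and continuity verifications being routine.
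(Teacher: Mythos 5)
Your proof is correct and follows essentially the same route the paper intends: the $G_\delta$ property from the preceding corollary, density from Lemma \ref{lem: HK density} applied to finitely supported approximants with $\delta$ small, and the Baire category theorem in the complete metric spaces of (symmetric) probability measures, followed by the HK $\Rightarrow$ $C^*$-simple lemma and the Hartman--Kalantar essential-freeness theorem. Your explicit intersection with the residual set of full-support measures to secure non-degeneracy is a worthwhile clarification of a point the paper leaves implicit (its construction in Lemma \ref{lem: HK density} happens to produce full-support measures, but genericity of non-degeneracy does need the separate residuality argument you give).
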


A group $G$ is said to satisfy the Powers property if for every $a \in \ker \tau$ and each $\varepsilon > 0$ there is a finitely-supported probability measure $\nu$ on $G$ such that $\lVert a^\nu\rVert < \varepsilon$. In his pioneering work on $C^*$-simplicity \cite{Po75}, Powers gave the following criterion:

\begin{theor}
A groups that satisfies the Powers property is $C^*$-simple.
\end{theor}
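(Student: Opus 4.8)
The plan is to carry out the classical Powers averaging argument. To prove simplicity I will show that $C_r(G)$ contains no nontrivial closed two-sided ideal, by arguing that any \emph{nonzero} such ideal $J$ must already contain the unit $\fa_e$, and hence equals $C_r(G)$. The conceptual content is exactly the interplay between the two available facts: the Powers property lets me conjugation-average an element of $\ker\tau$ down to arbitrarily small norm, while a two-sided ideal is stable under precisely this kind of averaging; together these force the trace component of any element of $J$ to lie in $J$.

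Concretely, let $J \subseteq C_r(G)$ be a nonzero closed two-sided ideal and fix $0 \neq b \in J$. I set $a = b^{*}b \in J$, which is positive. Since $\tau(a) = \tau(b^{*}b) = \lVert b v\rVert^{2}$ (with $v$ the unit vector at the identity) and $v$ is a separating vector for $C_r(G)$, faithfulness of $\tau$ gives $\tau(a) > 0$. I then decompose $a = \tau(a)\fa_e + x$ along $C_r(G) = \ker\tau \oplus \Span\{\fa_e\}$, so that $x \in \ker\tau$. Given $\varepsilon > 0$, the Powers property supplies a finitely supported probability measure $\nu$ on $G$ with $\lVert x^{\nu}\rVert < \varepsilon$. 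Because $\fa_e^{\nu} = \fa_e$ and $a \mapsto a^{\nu}$ is linear, I obtain $a^{\nu} = \tau(a)\fa_e + x^{\nu}$, whence
\[
\lVert a^{\nu} - \tau(a)\fa_e\rVert = \lVert x^{\nu}\rVert < \varepsilon.
\]

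Next I would observe that $a^{\nu} \in J$. Indeed $a^{\nu} = \sum_{g}\nu(\{g\})\,\fa_g^{-1} a\, \fa_g$ is a finite convex combination of conjugates of $a$; each conjugate $\fa_g^{-1} a\, \fa_g$ lies in the two-sided ideal $J$, and as $J$ is a linear subspace the whole combination does as well. Since $\varepsilon > 0$ is arbitrary and $J$ is norm-closed, the displayed estimate forces $\tau(a)\fa_e \in J$; as $\tau(a) > 0$ this yields $\fa_e \in J$, i.e. $J = C_r(G)$. This contradicts $J$ being proper, so $C_r(G)$ has no nontrivial closed two-sided ideal and $G$ is $C^{*}$-simple.

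The main obstacle here is not a hard estimate but keeping the two supporting facts honest. First, I rely on $\tau$ being \emph{faithful} on $C_r(G)$, which is what turns $b \neq 0$ into $\tau(b^{*}b) > 0$; this follows from $\tau(c^{*}c) = \lVert cv\rVert^{2}$ together with $v$ being a separating vector for $C_r(G) \subseteq L(G)$. Second, I need that the conjugation-averaging operator $a \mapsto a^{\nu}$ sends $J$ into $J$, which is immediate from two-sidedness, linearity, and closedness of $J$. Everything else is the routine bookkeeping of splitting off the trace component and sending $\varepsilon \to 0$.
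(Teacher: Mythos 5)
Your argument is correct: it is the classical Powers conjugation-averaging proof, and the two supporting facts you flag (faithfulness of $\tau$ via the separating vector $v$, and stability of a closed two-sided ideal under the averaging $a \mapsto a^{\nu}$) are exactly the right ones and are used correctly. The paper itself does not prove this theorem but only cites Powers \cite{Po75}; your write-up is the standard argument from that source, so there is nothing to reconcile.
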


This property turned out to be a characterization of $C^*$-simplicity, as was shown by Haagerup \cite{Ha16} and Kennedy \cite{Ke20}:
\begin{theor}
A groups is $C^*$-simple if and only if it satisfies the Powers property.
\end{theor}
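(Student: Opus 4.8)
The forward implication—that the Powers property implies $C^*$-simplicity—is the theorem immediately preceding, so it remains to prove the converse: that a $C^*$-simple group $G$ satisfies the Powers property. The plan is to pass to the state space of $C_r(G)$, recast the required averaging as a minimax problem, and then feed in the freeness of the boundary action supplied by Kalantar--Kennedy. First I would reduce to the self-adjoint case. Writing $a = a_1 + i a_2 \in \ker\tau$ with $a_1, a_2$ self-adjoint, one checks $a_1, a_2 \in \ker\tau$ as well; since the operator $c \mapsto c^{\nu}$ has norm $1$, preserves $\ker\tau$, and commutes with taking real and imaginary parts, a two-step averaging (first contract $a_1$, then the imaginary part of the result) produces $\nu = \nu_1 * \nu_2$ with $\lVert a^{\nu}\rVert$ arbitrarily small, provided we can contract self-adjoint elements. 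So it suffices to show: for every self-adjoint $b \in \ker\tau$ and $\varepsilon > 0$ there is a finitely supported probability measure $\nu$ with $\lVert b^{\nu}\rVert < \varepsilon$.

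For self-adjoint $b^{\nu}$ we have $\lVert b^{\nu}\rVert = \sup_{\phi \in \bS(C_r(G))}\lvert \phi(b^{\nu})\rvert$, and the two signs can be treated separately: if for every self-adjoint $b \in \ker\tau$ one can find $\nu$ with $\sup_{\phi}\phi(b^{\nu}) \le \varepsilon$ (that is, $b^{\nu} \le \varepsilon \cdot \fa_e$), then applying this to $b$ and then to $-b^{\nu_1}$ and convolving the two measures yields a two-sided bound, hence $\lVert b^{\nu}\rVert \le \varepsilon$. Now $\phi(b^{\nu}) = (\nu \cdot \phi)(b) = \sum_{g}\nu(\{g\})\,\phi(b^{g})$ is affine in each variable and, for fixed $\nu$, weak-$*$ continuous in $\phi$. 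Applying Sion's minimax theorem over the convex set of finitely supported probability measures and the weak-$*$ compact convex state space, I would obtain
\[
\inf_{\nu}\ \sup_{\phi}\ \phi(b^{\nu}) \ =\ \sup_{\phi}\ \inf_{\nu}\ \phi(b^{\nu}) \ =\ \sup_{\phi}\ \inf_{g \in G}\ \phi(b^{g}),
\]
the last equality because the infimum of an affine functional over the simplex is attained at a vertex. The whole problem thus reduces to the single inequality $\sup_{\phi}\inf_{g \in G}\phi(b^{g}) \le 0$; equivalently, every state $\phi$ admits conjugates $b^{g}$ on which $\phi$ takes values arbitrarily close to or below $0 = \tau(b)$.

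The heart of the argument, and the step I expect to be the main obstacle, is this last inequality, where the hypothesis of $C^*$-simplicity enters. By the Kalantar--Kennedy theorem recalled in the introduction, $C^*$-simplicity of $G$ is equivalent to topological freeness of the action of $G$ on its Furstenberg boundary $\partial_F G$, which is minimal and strongly proximal. I would exploit the standard correspondence between states on $C_r(G)$ and the boundary: the conjugation action of $G$ on $\bS(C_r(G))$ is compatible, through this correspondence, with the action on $\partial_F G$, and strong proximality lets one push the weak-$*$ closed orbit $\overline{G \cdot \phi}$ of any state toward a boundary state $\psi$ arising from a point of $\partial_F G$. Freeness of the action then forces such a boundary state to annihilate $\ker\tau$, i.e. $\psi = \tau$, so that $\psi(b) = \tau(b) = 0$; as $\psi$ is a weak-$*$ limit of conjugates $g_{\alpha}\cdot\phi$, we get $\inf_{g}\phi(b^{g}) \le \lim_{\alpha}(g_{\alpha}\cdot\phi)(b) = 0$, as needed. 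The delicate point is exactly the interaction of strong proximality with the noncommutative state space: one must verify that the concentration furnished by strong proximality for measures on $\partial_F G$ transfers to the orbit closure of $\phi$ inside $\bS(C_r(G))$, and that it is freeness, rather than mere minimality, that collapses the resulting boundary state to $\tau$. This is precisely where non-$C^*$-simple groups fail, since their boundary states may be supported on nontrivial amenable stabilizers and need not vanish on $\ker\tau$. Assembling the three steps—reduction to self-adjoint $b$, the minimax identity, and the boundary collapse—completes the proof of the converse and hence of the stated equivalence.
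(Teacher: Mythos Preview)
The paper does not supply a proof of this theorem at all: it is quoted as a result of Haagerup \cite{Ha16} and Kennedy \cite{Ke20} and then used as a black box (the paper only proves the auxiliary Lemma~\ref{lem: powers strong} that upgrades the Powers property to simultaneous averaging over finite sets). So there is no ``paper's own proof'' to compare against.

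As to your proposal on its own merits: the overall architecture---reduce to self-adjoint $b$, then to a one-sided bound $b^{\nu}\le\varepsilon\cdot\fa_e$, apply Sion's minimax to convert $\inf_{\nu}\sup_{\phi}\phi(b^{\nu})$ into $\sup_{\phi}\inf_{g}\phi(b^{g})$, and finally show the latter is $\le 0$ by proving that $\tau$ lies in the weak-$*$ closure of every $G$-orbit in $\bS(C_r(G))$---is exactly the strategy of Haagerup's proof, and your reductions and the minimax step are carried out correctly. The one place where your sketch is genuinely incomplete is the last step: you assert that strong proximality of $\partial_F G$ transports to orbit closures inside $\bS(C_r(G))$ and that freeness collapses the limiting ``boundary state'' to $\tau$, but you do not explain the mechanism. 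The missing ingredient is an equivariant bridge between $\bS(C_r(G))$ and $\partial_F G$---e.g.\ via the embedding $C_r(G)\hookrightarrow C(\partial_F G)\rtimes_r G$ (Kalantar--Kennedy), extending $\phi$ to the crossed product, restricting to $C(\partial_F G)$ to get a measure, pushing that measure to a Dirac mass $\delta_x$ by strong proximality, and then arguing that the resulting limit state, evaluated on $\fa_h$ for $h\neq e$, vanishes because $h$ does not fix $x$. Without making this link explicit, the phrase ``boundary state arising from a point of $\partial_F G$'' has no meaning inside $\bS(C_r(G))$, and that is precisely the nontrivial content of the Haagerup/Kennedy argument.
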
 

We will need the following simple but convenient strengthening of the Powers property.

\begin{lem}\label{lem: powers strong}
If $G$ is a group with the Powers property, then for every finite $S \subset \ker \tau$ and for every $\varepsilon > 0$ there is a finitely-supported symmetric probability measure $\nu$ such that $\lVert a^{\nu}\rVert < \varepsilon$ for each $a \in S$.
\end{lem}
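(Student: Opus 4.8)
The plan is to bootstrap the single-element Powers property into a statement about a finite set and about symmetric measures, using two elementary facts about the conjugation action. The first is the composition law $a^{\mu * \rho} = (a^\mu)^\rho$, which follows from $a^{gh} = (a^g)^h$ together with the definition of convolution; the second, recorded earlier in the text, is that for a probability measure $\mu$ the operator $a \mapsto a^\mu$ has norm $1$. Crucially, these combine to give the monotonicity estimate $\lVert a^{\mu * \rho}\rVert \le \lVert a^\mu \rVert$ valid for any probability measure $\rho$: once an element has been made small, convolving on the right can only keep it small.

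First I would handle a finite set $S = \{a_1, \dots, a_k\} \subset \ker\tau$ by induction on the index, exploiting monotonicity so as not to spoil earlier progress. Set $\nu_0 = \delta_e$. Given a finitely-supported probability measure $\nu_{j-1}$ with $\lVert a_i^{\nu_{j-1}}\rVert < \varepsilon$ for all $i < j$, observe that $a_j^{\nu_{j-1}} \in \ker\tau$, since $\ker\tau$ is invariant under the conjugation operators; applying the Powers property to this element yields a finitely-supported probability measure $\mu_j$ with $\lVert (a_j^{\nu_{j-1}})^{\mu_j}\rVert < \varepsilon$. Put $\nu_j = \nu_{j-1} * \mu_j$. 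Then $a_j^{\nu_j} = (a_j^{\nu_{j-1}})^{\mu_j}$ is small by construction, while for $i < j$ the composition law and monotonicity give $\lVert a_i^{\nu_j}\rVert = \lVert (a_i^{\nu_{j-1}})^{\mu_j}\rVert \le \lVert a_i^{\nu_{j-1}}\rVert < \varepsilon$. After $k$ steps, $\nu := \nu_k$ is a finitely-supported probability measure with $\lVert a^\nu\rVert < \varepsilon$ for every $a \in S$.

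It remains to pass to a symmetric measure without losing the bound. I would replace $\nu$ by $\nu * \check\nu$, where $\check\nu(\{g\}) = \nu(\{g^{-1}\})$ denotes the reversal. This is again a finitely-supported probability measure, and it is symmetric: since reversal is an involution and an anti-homomorphism for convolution, one checks directly that $\nu * \check\nu$ is fixed by reversal. Finally, by the composition law and monotonicity, $\lVert a^{\nu * \check\nu}\rVert = \lVert (a^\nu)^{\check\nu}\rVert \le \lVert a^\nu\rVert < \varepsilon$ for every $a \in S$, so $\nu * \check\nu$ is the desired symmetric measure.

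The argument is essentially formal once the two facts above are in place; the only points requiring care are the composition/monotonicity law (which makes the inductive reduction over $S$ stable) and the observation that right-convolving by $\check\nu$ simultaneously symmetrizes the measure and, being a norm-one operation, cannot undo the smallness already achieved. I expect the mild bookkeeping around reversals of convolutions to be the main thing to verify carefully, but no genuine obstacle arises.
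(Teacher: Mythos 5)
Your proposal is correct and follows essentially the same route as the paper: induct over the elements of $S$, convolving at each step with a measure supplied by the Powers property applied to $a_j^{\nu_{j-1}} \in \ker\tau$, using that $a \mapsto a^\mu$ has norm $1$ so earlier bounds survive, and finally symmetrize by right-convolving with the reversed measure. Your write-up is if anything slightly more explicit than the paper's about the monotonicity step that keeps the previously handled elements small.
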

\begin{proof}
First we note that it is enough to construct a possibly asymmetric measure $\nu$ and after that take $\nu * \nu^{-1}$ as the final result.

We will prove the statement (without requiring the measure to be symmetric) by induction. Let us fix $\varepsilon > 0$.
For an empty set $S \subset \ker \tau$ we can take any measure. Assume we constructed the measure $\nu$ for a  finite set $S \subset \ker \tau$, let us construct a measure for $S' = S \cup \lbrace a\rbrace$, for some $a \in \ker \tau$. By definition of the Powers property, there is a measure $\nu_1$ such that $\lVert ({a}^{\nu})^{\nu_1}\rVert < \varepsilon$, since $a^{\nu} \in \ker \tau$. Now it is easy to see that the measure $\nu' = \nu * \nu_1$ is such that 
$\lVert s^{\nu}\rVert < \varepsilon$ for every $s \in S'$.
\end{proof}

Now, we would like to describe the construction, but first let us introduce a suitable variant of the decomposintion lemma.

\begin{lem}\label{lem: second decomp}
Let $(\upsilon_i)_{i=0}^{\infty}$ be a probability vector such that for the integer-valued i.i.d. process $(K_i)$ with distribution $(\upsilon_i)$ we have $\limsup_{i \to \infty}(K_i - i) = \infty$. Let $\mu_i$ be some probability measures on a group $G$. 
Denote $A_n = \bigcup_{i \leq n}\supp \mu_i$, and $\nu = \sum_{i=0}^{\infty}\upsilon_i\mu_i$ (clearly, this is a probability measure). We claim that for every $\varepsilon > 0$ and $M > 0$ there is $N$ such that for all $n > N$ there is a decomposition
\begin{equation}\label{eq: second decomp}
\nu^{*n} = \eta_n + \sum_{i \geq M}\sum_{q' \in A^{<i}_{i-1}} \sum_{q'' \in G} \alpha_{i,q',q''} \cdot q' * \mu_i * q'',
\end{equation}
where $\eta_{n,\varepsilon}$ is a positive measure with $\lvert \eta_{n, \varepsilon}\rvert < \varepsilon$, $\alpha_{i,q',q''} \geq 0$, and $\lvert \eta_{n,\varepsilon}\rvert + \sum_{i,q}\alpha_{i,q} = 1$.
\end{lem}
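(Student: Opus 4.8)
The plan is to reuse, in a simplified form, the coupling and stopping-time argument employed for the earlier decomposition lemma, now without the auxiliary $\{1,2\}$-valued process $(Y_i)$, since here there is a single family $(\mu_i)$ rather than a split $\zeta'_i,\zeta''_i$. First I would realise $\nu$ as a randomised-index walk: take the i.i.d. process $(K_i)$ with law $(\upsilon_i)$ and, conditionally on $(K_i)$, let each $X_i$ be distributed according to $\mu_{K_i}$ using independent randomness. Then $(X_i)$ is i.i.d. with common law $\sum_i\upsilon_i\mu_i=\nu$, so the distribution of the product $X_1\cdots X_n$ is exactly $\nu^{*n}$, and it suffices to decompose this distribution.

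Next I would introduce the stopping time $T=\min\{t:K_t>t \text{ and } K_t\ge M\}$, which depends only on $(K_i)$. For indices $t\ge M$ the requirement $K_t\ge M$ is automatic once $K_t>t$, so the hypothesis $\limsup_{i\to\infty}(K_i-i)=\infty$ (in fact already $\limsup_i(K_i-i)>0$ would do) guarantees $K_t>t$ for infinitely many $t$, and in particular for some $t\ge M$; hence $T<\infty$ almost surely. Consequently $\pr(T>n)\to 0$, and I choose $N$ with $\pr(T>N)<\varepsilon$.

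For $n>N$ I decompose according to the value of $T$. On the event $\{T=t\le n\}$ write $X_1\cdots X_n=q'\cdot X_t\cdot q''$ with $q'=X_1\cdots X_{t-1}$ and $q''=X_{t+1}\cdots X_n$, and set $i=K_t$, so that $i>t$ and $i\ge M$. Since $\{T=t\}$ is measurable with respect to $(K_i)$ alone, and $X_t$ is built from $K_t$ together with randomness independent of everything else, the conditional law of $X_t$ given the whole index trajectory and all $X_j$ with $j\ne t$ is still $\mu_{K_t}=\mu_i$. Thus each fixed trajectory/realisation on this event contributes a measure of the form $q'*\mu_i*q''$, weighted by its probability $\alpha_{i,q',q''}\ge 0$; letting $\eta_n$ be the restriction of the law of $X_1\cdots X_n$ to $\{T>n\}$ gives a positive measure with $\lvert\eta_n\rvert=\pr(T>n)<\varepsilon$, and the total mass is $1$.

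The one point needing care — the only real content beyond bookkeeping — is the support claim $q'\in A_{i-1}^{<i}$, and this is where I would use minimality of $T$: for every $j<t=T$ the stopping condition fails, so $K_j\le j$ or $K_j<M$. In the first case $K_j\le j\le t-1<i$; in the second $K_j<M\le i$; either way $K_j\le i-1$, so $X_j\in\supp\mu_{K_j}\subset A_{K_j}\subset A_{i-1}$. As $q'$ is a product of the $t-1<i$ elements $X_1,\dots,X_{t-1}$, all lying in $A_{i-1}$, it is a word of length $<i$ over $A_{i-1}$, i.e. $q'\in A_{i-1}^{<i}$, while $q''$ ranges freely over $G$. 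Summing over $t\le n$ and over all realisations then yields the asserted decomposition.
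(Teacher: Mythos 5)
Your proof is correct and follows essentially the same route as the paper's: the same coupling of $(X_i)$ to the index process $(K_i)$, the same stopping time $T=\min\{t: K_t>t,\ K_t\ge M\}$, and the same conditioning on $\{T=t\}$ to extract the factor $q'*\mu_i*q''$. If anything, your verification that $K_j\le i-1$ for $j<T$ (hence $q'\in A_{i-1}^{<i}$) spells out a point the paper leaves implicit.
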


\begin{proof}
We consider the i.i.d. process $(K_i)$ and let $(X_i)$ be a process where each $X_i$ is coupled to $K_i$ in the following way: the distribution of $X_i$ relative to $K_i$ is $\mu_{K_i}$. This implies that the whole distribution of $X_i$ is precisely $\nu$. We define the stopping time $T$ to be the first $t$ such that $K_t > t$ and $K_t \geq M$. Note that this is indeed a stopping time and it is finite almost surely. Take $N$ such that $T > N$ with probability smaller than $\varepsilon$. Take initial segment $k_1, \ldots, k_n$ with $n > N$ of a trjectory of the process $(K_i)$ such that the stopping time $T = t \leq n$ (the probaility of the latter event is bigger than $1 - \varepsilon$). Take the relative realization of $X_1, \ldots , X_{t-1}$, these are some elements $x_1, \ldots, x_{t-1} \in A_{k_{t-1}}$, and the realtive realization of $x_{t+1}, \ldots, x_n \in G$. Now observe that the relative ditribution of $X_t$ is $\mu_{k_t}$. This means that for the product $X_1 \cdot \ldots \cdot X_n$ we have the distribution $q' * \mu_{i} * q''$, where $i = k_t$, $q' \in A_{i-1}^{< t-1} \subset  A_{i-1}^{< i-1}$ (since $i = k_t > t$ by definition of the stopping time). It remains to note that $\eta$ is the leftover measure correponding the event $T > n$.
\end{proof}

\begin{proof}[Proof of Lemma \ref{lem: HK density}]
First let us consider a dense in the unit ball off $\ker \tau$ sequence $d_i$. 
Also fix a sequence $(c_i)_{i \geq 1}$ that enumerates $G$.
Take a $0 < \delta < 1$. Let us constuct a probability vector $(\upsilon_i)$ such that $\upsilon_0 = 1 - \delta$ and for the i.i.d. process $(K_i)$ with distribution $(\upsilon_i)$ we will have $\limsup_{i \to \infty} K_i - i = \infty$. For that it is enough to set $\upsilon_i \sim i^{-\alpha}$ for some $1 < \alpha < 2$. Now we will constuct iteratively the finite subsets $A_i$ of the group $G$ and probability measures $\mu_i$. For $i > 0$ we set $\mu_i$ to be a symmetric probability measures such that for all $j < i$ and all $g \in A^{<i}_{i-1}$ we have $\lVert d_j^{g * \mu_i} \rVert < 1/4$, and also $c_i, c_i^{-1} \in \supp \mu_i$; such measure exists by Lemma \ref{lem: powers strong} (we might need to add a little weight to $c_i$ and $c_i^{-1}$). We then set $A_i = A_{i-1} \cup \supp \mu_i$. Note that we tailored the construction, so that we can apply the decomposition from Lemma \ref{lem: second decomp} to the weighted sum $\nu = \sum_{i = 0}^{\infty}$. Thus for every $M > 0$ and for $\varepsilon = 1/4$ there is $N > 0$ such that for all $n > N$ we will have the decomposition \eqref{eq: second decomp}. 
Now observe that 
\begin{equation*}\label{eq: basic estimate in HK density proof}
\lVert d_M^{q' * \mu_i * q''} \rVert \leq \Big\lVert {{(d_M^{q'})} ^ {\mu_i}} \Big\rVert \leq 1/4.
\end{equation*}
for all $q'' \in G$, $i \geq M$ and $q' \in A_{i-1}^{<i}$. From this and the decomposition \ref{eq: second decomp} we get that 
\begin{multline*}
\lVert d_M^{\nu^{*n}} \rVert  \leq \lVert {d_M}^{\eta_i} \rVert + \sum_{i \geq M}\sum_{q' \in A^{<i}_{i-1}} \sum_{q'' \in G} \alpha_{i,q',q''} \cdot \lVert d_M^{q' * \mu_i * q''}\rVert < \lvert \eta_n \rvert + 1/4 < 1/2,
\end{multline*}
we used that $\alpha_{i,q',q''}$ are non-negative numbers with the sum bounded by $1$ and that $\lvert \eta_n\rvert < \varepsilon = 1/4$.
We get that $$\nu = (1 - \delta)\mu_0 + \delta \cdot \frac{\sum_{i = 1}^{\infty}\upsilon_i \mu_i}{\delta}.$$ Now $$\nu' = \frac{\sum_{i = 1}^{\infty}\upsilon_i \mu_i}{\delta}$$ is the requested probability measure.
\end{proof}


\end{document}